\newcommand{\bcen}{\begin{center}}     \newcommand{\ecen}{\end{center}}
\newcommand{\bay}{\begin{array}}      \newcommand{\eay}{\end{array}}
\newcommand{\beq}{\begin{eqnarray*}}      \newcommand{\eeq}{\end{eqnarray*}}
\def\ga{\alpha}
\def\gb{\beta}
\def\gs{\sigma}
\def\HH{\mathrm{HH}}
\def\HC{\mathrm{HC}}
\def\Hom{\mathrm{Hom}}
\def\Gr{\mathrm{Gr}}
\def\Grb{\mathrm{Gr}_{\bullet}}
\def\gr{\mathrm{gr}}
\def\Ext{\mathrm{Ext}}
\def\Mod{\mathrm{Mod}}
\def\csh{\mathrm{csh}}
\def\sh{\mathrm{sh}}
\def\Tor{\mathrm{Tor}}
\def\bz{\mathbb{Z}}
\def\cal{\mathcal}
\def\cf{\mathcal{F}}
\def\cm{\mathcal{M}}
\def\cn{\mathcal{N}}
\def\cg{\mathcal{G}}
\def\cs{\mathcal{S}}
\def\cz{\mathcal{Z}}
\def\fs{\mathfrak{S}}
\def\sa{\mathscr{A}}
\def\sb{\mathscr{B}}
\def\sc{\mathscr{C}}
\def\sn{\mathscr{N}}
\def\rr{\rightarrow}
\def\lrr{\longrightarrow}
\numberwithin{equation}{section}
\newtheorem{theorem}[equation]{Theorem}
\newtheorem{lemma}[equation]{Lemma}
\newtheorem{proposition}[equation]{Proposition}
\newtheorem{corollary}[equation]{Corollary}
\theoremstyle{definition}
\newtheorem{definition}[equation]{Definition}
\newtheorem{example}[equation]{Example}
\newtheorem{remark}[equation]{Remark}
\def\Number#1{\refstepcounter{equation}
              \leqno(\theequation)\if*#1%
              \else\def\@currentlabel{{\rm\theequation}}\label{#1}%
              \fi}
\newenvironment{point}[2]%
  {\vspace{0.5\jot}\ifx*#2\let\pointlabel\relax\else\def\pointlabel{#2}\fi
   \refstepcounter{equation}\trivlist
   \item[\bf\hskip\labelsep\theequation.
         \ifx\pointlabel\relax\else\space\pointlabel\space\fi]
   \ignorespaces #1
  }{\relax}
\begin{document}\vspace{-3truecm}
\title{Hochschild and cyclic (co)homology of superadditive categories}

\author{Deke Zhao}

\date{\footnotesize School of Applied Mathematics, \\ Beijing Normal University at Zhuhai,\\ No.\,18 Jinfeng Road,\\ Zhuhai City 519087, China\\e-mail:deke@amss.ac.cn}

\maketitle

\begin{abstract} We define the Hochschild
and cyclic (co)homology groups for superadditive categories and show that these (co)homology groups are graded Morita invariants.  We also show that the Hochschild and cyclic homology are compatible with the tensor product of superadditive categories.
\end{abstract}

{\footnotesize \noindent \textbf{Keywords:} $K$-categories; Superadditive categories; Hochschild (co)homology; Cyclic (co)homology; Graded Morita equivalence; K\"{u}nneth formula

\vspace{2\jot}
\noindent \textbf{Mathematics Subject Classifications (2010)} 18A99, 18G99, 18D99, 18H99}

\section{Introduction}
Let $K$ be a field. Recall that a $K$-linear additive category (or simply $K$-category) is a category together with a $K$-vector space structure on each of its homomorphism sets such that composition is bilinear, which is defined and investigated by Mitchell in \cite{Mitchell}. Since their appearance, they have been used as a very important tool not only in algebra, but in many other fields, including algebraic topology, logic, computer science, etc. The Hochschild homology and cohomology theories $\HH_*(\sc,\cm)$ and $\HH^*(\sc,\cm)$ of a $K$-category $\sc$ with coefficients in a bimodule $\cm$ over $\sc$ were introduced by Mitchell in \cite{Mitchell}, and are closely related with theories studied by Keller \cite{Keller1999} and McCarthy \cite{McMarthy}. It is well-known that in case of a $K$-category with finite number of objects, the Hochschild (co)homology coincides with the usual Hochschild (co)homology of the $K$-algebra associated to the $K$-category (ref.~\cite[Proposition~2.7]{cibils-Redondo}).

A ($K$-linear) superadditive category is a $K$-category $\sa$ with each morphism set is a $\bz_2$-graded $K$-vector space (or simply superspace) such that the composition of morphisms is compatible with this $\bz_2$-grading.  The purpose of this paper is to define the Hochschild and cyclic (co)homology of superadditive categories and investigate their
behaviors with respect to the graded Morita equivalence and tensor product of superadditive categories.

 The Hochschild and cyclic (co)homology  of superadditive categories enjoy a number of desirable properties, the most basic being the agreement properties, i.e. the facts that if the superadditive category is trivially grading they coincides with the Hochschild and cyclic (co)homology theories of $K$-categories and that when applied to the superadditive category with a finite number of objects they specialize to the Hochschild and cyclic (co)homology of the corresponding superalgebra of the superadditive category (Proposition~\ref{Prop:Category-Algebras-HH-HC}). In particular, when the superadditive category is the superadditive category of finitely generated projective modules over a finite dimensional superalgebra they are exactly the  Hochschild and cyclic (co)homology of this superalgebra. Furthermore, we show that the Hochschild and cyclic (co)homology of superadditive categories are graded Morita equivalent invariants (Theorem~\ref{Them:HH-HC-Morita}) and prove the Eilenberg-Zilber Theorem for Hochschild homology and the K\"{u}nneth exact sequence for cyclic homology of superadditive categories (Theorems~\ref{Them:HH-shuffle} and \ref{Them:HC-Cyclic-shuffle}).

The paper is organized as follows. We begin in Section~\ref{Sec:Superadditive-cat} with preliminaries on superadditive categories and fix our notations. In Section~\ref{Sec:Hochschild-Cyclic} we define the Hochschild and cyclic (co)homology of superadditive categories and show that they are coincide with the usual Hochschild and cyclic (co)homology of superalgebras. In Section~\ref{Sec:Morita} we give a brief description of graded Morita equivalence of superadditive categories and prove the graded Morita equivalent invariance of the Hochschild and cyclic (co)homology. In Section~\ref{Sec:Shuflle-Cyclic-Shuffle}, we define the shuffle and cyclic shuffle product for superadditive categories and prove the Eilenberg-Zilber Theorem for Hochschild homology and the K\"{u}nneth exact sequence for cyclic homology.

\subsection*{Acknowledgements.}The author would like to thank the Institute of  Mathematics of the Mathematics Institute of the Chinese Academy of Sciences in Beijing and the Chern Institute of Mathematics in Nankai University for their hospitality and support while part of this work was carried out. The author was supported by the National Natural Science Foundation of China (Grant No. 11101037).

\section{Preliminaries on Superadditive categories}\label{Sec:Superadditive-cat}
In this section, we recall some facts on  superadditive category and  fix the notations.

\begin{point}{}* Throughout this paper we denote by $K$ a filed and by $\bz_2=\{0, 1\}$. All unadorned tensor products will be over the field $K$, i.e., $\otimes=\otimes_K$.  The composition $\ga\gb$ of two morphisms is to be read as first $\ga$ and then $\gb$. However, this will not present us from sometimes writing $\ga(x)$ in place of $x\ga$ when $\ga$ happens to be a function. When this is done, one must take into the account the switch $(\ga\gb)(x)=\gb(\ga(x))$.

In certain case when there can be no confusion, we shall write $X\in \sa$ to denote that $X$ is an object of $\sa$. Occasionally, we shall identify an object $X$ with its identity morphism $1_X$. For two objects $X, Y$ of a category $\sa$, we shall write $\sa(X,Y)$ for the set of morphisms form $X$ to $Y$ and write $_X\!\ga_Y$ for $\ga\in\sa(X,Y)$ to indicate the subscript.
 By a category we always mean a $K$-category, that is every Hom-set in it is a
$K$-linear space and the composition maps are bilinear. \end{point}

\begin{point}{}*\label{Point:Superspace}
Recall that a \textit{superspace} is a $\mathbb{Z}_2$-graded
$K$-vector space, namely a $K$-vector space $V$ with a decomposition
into two subspaces $V = V_{0}\oplus V_{1}$. A nonzero element $v$ of
$V_i$ will be called \textit{homogeneous} and we denote its degree by
$|v|=i\in \mathbb{Z}_2$. We will view  $K$ as a superspace concentrated in degree 0.
Given   superspaces $V$ and $W$, we view the direct sum $V\oplus W$
and the tensor product $V\otimes W$  as superspaces with $(V\oplus
W)_i = V_i\oplus W_i$, and $(V\otimes W)_i= V_0\otimes W_i\oplus
V_1\otimes W_{1-i}$  for $i\in\mathbb{Z}_2$. With this grading,
$V\otimes_K W$ is called the \textit{graded tensor product} of $V$ and
$W$.  Also, we make the
vector space $\Hom_K(V, W)$ of all $K$-linear maps from $V$ to $W$
into a superspace by setting that $\Hom_K(V, W)_i$ consists of all
the $K$-linear maps $f: V \rightarrow W$ with $f(V_j)\subseteq
W_{i+j}$ for $i, j\in \mathbb{Z}_2$. Elements of
$\Hom_K(V, W)_0$ (resp.~$\Hom_K(V, W)_1$) will be referred to as
\textit{even (resp.~odd) linear maps}. We denote by $\Gr K$ the category  whose objects are superspaces over $K$ and morphisms are even linear maps. Now the Koszul sign rule,
$v\otimes w\mapsto (-1)^{|v||w|}w\otimes v$ (Note this and other such expressions only make sense for homogeneous elements.), implies that $V\otimes W\cong W\otimes V$ in $\Gr K$.
The \textit{parity charge functor} $\Pi$ from $\Gr K$ to itself is an auto-equivalence defined as follows:
\begin{align*}
  (\Pi V)_{i} = V_{i+1} \text{ for }V=V_0\oplus V_1 \in\Gr K \text{ and }i\in\bz_2;\qquad
  \Pi(\ga)=\ga \text{ for }\ga\in \Gr K.
\end{align*}
 \end{point}
\vspace{-0.5truecm}
\begin{definition}[\protect{\cite[Chapter~3.2.7]{Manin}}]
A \textit{superadditive category} $\sa$ is a $K$-category with each morphism set $\sa(X,Y)$ is provided a $\bz_2$-grading compatible with composition of morphisms, that is, for each pair $X,Y\in \sa$ we have \begin{equation*}
 \sa(X,Y)=\sa(X,Y)_0\oplus\sa(X,Y)_1
\end{equation*}such that if $\ga\in\sa(X,Y)_i$ and $\gb\in\sa(Y,Z)_j$, then $\ga\gb\in\sa(X,Z)_{i+j}$ for $i,j\in\bz_2$ and $Z\in \sa$.
 As usual,  $\sa^{\mathrm{op}}$ denotes  the \textit{opposite category} of $\sa$, which is the superadditive category has the same objects and morphisms as $\sa$ but the composition of morphisms is given by $$\phi^{\mathrm{o}}\psi^\mathrm{o}:=(-1)^{|\phi||\psi|}(\psi\phi)^{\mathrm{o}}.$$
 \end{definition}

\begin{example}\label{Exam:shifts}Let $A$ be a finite dimensional superalgebra over
$K$ and denote by $\mathrm{Gr\,}A$
 the category of all (left) $\bz_2$-graded $A$-modules with even
 homomorphisms $$\Hom_{\Gr A}(X, Y)=\{f\in\Hom_{A}(X,Y)|f(X_i)\subseteq Y_i \text{ for all }i\in
 \bz_2\}.$$ Denote by $\gr A$ the full subcategory of $\Gr A$ consisting of all finitely generated $G$-graded $A$-modules. Then $\Gr A$  and $\gr A$  are abelian categories.
   Let $\Grb A$ be the category with the same objects as $\Gr A$ but with
 extended $\bz_2$-graded Hom-sets
 $$\Hom_{\Grb A}(X,Y):=  \Hom_{\Grb A}(X,Y)_0\oplus \Hom_{\Grb A}(X,Y)_1
 =\Hom_{\Gr A}(X,Y)\oplus\Hom_{\Gr A}(X,\Pi Y),$$ where $\Pi$ is the \textit{parity change functor}  from $\Gr A$ to itself (ref.\,\cite[Chapter~3]{Manin}).
Then $\Grb A$ is a superadditive category. Similarly, we can obtain the superadditive category $\gr_{\bullet} A$.
\end{example}

\begin{point}{}* Let $\sa$ and $\sb$ be superadditive categories. A
(covariant $\bz_2$-)\textit{graded functor} $\cf: \sa\rr\sb$
is an additive functor between the (ungraded) categories
$\sa$ and $\sb$ such that $\cf$ induces an even linear map
between $\sa(-,-)$ and
$\sb(\cf(-),\cf(-))$; that is, for all $X, Y\in\sa$,
\begin{equation*}\cf_{XY}:\, \sa(X,Y)=\bigoplus_{i\in \bz_2} \sa(X,Y)_i\longrightarrow \sb(\cf(X),\cf(Y))=\displaystyle{\bigoplus_{i\in
\bz_2}}\sb(\cf(X),\cf(Y))_i\end{equation*}
  is an even linear map,  or equivalently, there is a family $\{M(X)\}_{X\in\sa}$
  of  superspaces together with even linear map $$\sa(X,Y)\otimes M(Y)\rightarrow M(X)$$
  such that $1_{Z}\cdot m=m$,  $\phi\cdot (\psi\cdot m)=(\phi\psi)m$ for all $m\in M(Z)$,
  $\phi\in \sa(X,Y)$, $\psi\in \sa(Y,Z)$. $\sa(X, -): \sa\rr\Gr K$ is a covariant $\bz_2$-graded functor for each $X\in \sa$.

 \begin{remark}
   Let $\cs$ be the suspension form $\sa$ to itself, which is defined by $\cs(X)=X$ and $\cs(\ga_i)=\ga_{i+1}$ for $X\in \sa$, $\ga\in\sa(-,-)$, and $i\in\bz_2$. Then a functor $\cf: \sa\rr\sb$ is a graded functor provided $\cf$ commutes with the suspension.
\end{remark}

 Let $\cf,\cg:\sa\rr \sb$ be graded functors. Then a natural transformation $\eta: \cf\rr\cg$ is said to \textit{even} if it commutes with the suspension $\cs$. We denote by $\Gr(\sa, \sb)$ the category of all graded functors from $\sa$ to $\sb$ and morphisms being the graded natural transformations.

 By a \textit{graded (left) $\sa$-module} we means a graded functor $\cm:\sa\rr\Gr K$. We denote by $\Gr \sa$ the category of all graded $\sa$-modules and
morphisms being the graded natural transformations.
By definition, a ($\bz_2$)-{\it graded $\sa$-bimodule} is a ($\bz_2$-)graded bifunctor $\cm: {\sa} \times {\sa}^{\mathrm{op}}\rr\Gr K$. In other words $\cm$ is given by a set of
superspaces $\{ \cm(X,Y)\}_{X,Y \in {\sa}}$
and left and right actions
\begin{align*}
\cm(X,Y)\otimes\sa(Y,Z)\longrightarrow\cm(X,Z)\quad
\text{ and }\quad
\sa(Z,X)\otimes\cm(X,Y)\longrightarrow\cm(Z,Y)
\end{align*}
satisfying the usual associativity conditions for all $X,Y,Z\in\sa$. For instance the
standard graded bimodule over $\sa$ is itself.

\begin{point}{}* The \textit{(naive) tensor product category} $\sa\otimes\sb$ of superadditive categories $\sa$ and $\sb$ is the category whose objects are $X\otimes Y$ for $X\in\sa$ and $Y\in\sb$ and whose morphisms from $X\otimes Y$ to $X'\otimes Y'$ is the graded tensor product of superspaces $\sa(X,X')\otimes \sb(Y,Y')$. The composition is given by the rule \begin{align*}(\phi_1\otimes\phi_2)(\psi_1\otimes\psi_2)=
(-1)^{|\phi_2|\psi_1|}\phi_1\psi_1\otimes\phi_2\psi_2.\end{align*} Clearly $\sa\otimes\sb$ is again a superadditive category and we have the following canonical isomorphisms of superadditive categories $$(\sa\otimes\sb)\otimes\sc\simeq\sa\otimes(\sb\otimes\sc), \qquad\quad \sa\otimes\sb\simeq\sb\otimes\sa.$$

The category $\sa^e:=\sa\otimes\sa^{\mathrm{op}}$ will be called the \textit{enveloping category} of $\sa$. Then graded $\sa$-bimodules are equivalent to graded $\sa^e$-modules, which are given by $$\phi\mathfrak{m}\psi=(-1)^{|\mathfrak{m}|\psi}(\phi\otimes\psi)\mathfrak{m}=
(-1)^{|\phi|(|\mathfrak{m}|+|\psi|)}\mathfrak{m}(\psi\otimes\phi).$$
\end{point}

Recall that the parity charge $\Pi$ on $\Gr K$ defined in Section~\ref{Point:Superspace}. Given a graded $\sa$-module $\cf$ we define a shift operation $\pi$ on
 $\cf$ by letting $$\pi(\cf)(X):=\Pi\cf(X) \text{ for }X\in\sa\text{ and } \pi(\cf)(\ga):=\Pi\cf(\ga)\text{ for morphism }\ga\in\sa.$$
Now we define the category $\Gr_{\bullet} \sa$
to be the category with the same objects as $\Gr \sa$ but with the extended ($\bz_2$-graded) Hom-sets $$\Hom_{\Gr_{\bullet} \sa}(\cf, \cg):=\Hom_{\Gr\sa}(\cf, \cg)\bigoplus\Hom_{\Gr\sa}(\cf, \pi(\cg))\text{ for }\cf, \cg\in\Gr\sa,$$  alternatively, $\Grb\sa$ is the category
of all graded functors $\cf:\sa\rr\Gr K$ and morphism being the
natural transformation $\eta: \cf\rr\cg$ with
$\eta_X:\cf(X)\rr\cg(X)$ is a linear map for each $X$ in $\sa$. In
this way $\Grb \sa$ becomes a superadditive category.
\end{point}

\begin{remark}\label{Remark:superadditive-algebra}(i) If $\sa$ is a    superadditive category with finite number of objects,
 then \textit{the corresponding superalgebra} of $\sa$ is the unital superalgebra
$\Lambda_{\sa}:= \bigoplus_{X,Y \in {\sa}}
\sa(X,Y)$ with
a well-defined matrix product given by the composition of $\sa$ and  the
identity is the diagonal matrix with $1_{X}$ in
the diagonal. Furthermore, the usual $\bz_2$-graded
$\Lambda_{\sa}$-(bi)modules and $\bz_2$-graded $\sa$-(bi)modules coincide.

(ii) Let $A$ be a superalgebra equipped with a finite
complete set $E$ of orthogonal idempotents (non necessarily
primitive). Then there is a  finite superadditive category $\sc(A, E)$ associated to
$(A,E)$ with objects $E$ and morphisms form $x$ to $y$ is $xAy$.
Composition is given by the product in $A$. Furthermore, the
$\bz_2$-graded $\sc(A,E)$-(bi)modules and $\bz_2$-graded $A$-(bi)modules
coincide.
\end{remark}

\vspace{-0.5truecm}
\section{Hochschild and cyclic (co)homology}\label{Sec:Hochschild-Cyclic}
In this section we define Hochschild and cyclic (co)homology groups for superadditive categories by applying the ($\bz_2$-graded) Hochschild-Mitchell complex. Throughout this section, $\sa$ is a superadditive category unless otherwise stated.

\begin{definition}Let $X(n):=(X_{n}, \dots , X_1, X_0)$ be an $n+1$-tuple of objects of $\sa$. The {\it $K$-nerve} associated to $X(n)$ is the superspace \begin{align*}\sa(X_{n},X_{n-1}) \otimes \cdots \otimes\sa(X_{1},X_{0}).\end{align*}  The {\it $K$-nerve $\mathscr{N}_n$ of
degree $n$} is the direct sum of all the $K$-nerves associated to
$n+1$-tuples of objects
\begin{eqnarray*}
\sn_n(\sa) & = & \bigoplus_{n+1\text{-tuples}} \sa(X_{n},X_{n-1})
\otimes \dots \otimes \sa(X_{1},X_{0}).
\end{eqnarray*}
An element $a\in\sn_n(\sa)$ is a chain of degree $n$ and denote it by $a=(a_0,a_1,\cdots,a_n)$.
Then $\sn_n(\sa)$ is a graded $\sa$-bimodule defined by
\begin{align*}\sn_n(\sa)(X,Y):=\bigoplus_{n+1\text{-tuples}}\sa(X,X_n)\otimes\sa(X_{n},X_{n-1})
\otimes \dots \otimes \sa(X_{1},X_{0})\otimes\sa(X_0,Y).\end{align*}
The ($\bz_2$-graded) \textit{Hochschild-Mitchell complex} or \textit{standard complex} $\sn(\sa)$ of $\sa$ is
\begin{align*}
   \cdots \stackrel{\partial_{n+1}}\longrightarrow \sn_n(\sa) \stackrel{\partial_n}{\longrightarrow}
\cdots \stackrel{\partial_2}{\longrightarrow}\sn_1(\sa)
\stackrel{\partial_1}{\longrightarrow}\sn_0(\sa)
\stackrel{\partial_{0}}\longrightarrow \sa\longrightarrow 0.
\end{align*}
where $\partial_n=\sum_{i=0}^n(\!-1\!)^id_n^i$ with $d_n^i: \sn_n(\sa)\rr\sn_{n-1}(\sa)$ defined as follows:
\begin{equation}\label{Equ:Standard-complex}d_n^i(\ga_0\otimes\ga_1\otimes\cdots\otimes\ga_{n})=\left\{\aligned&
a_0a_1\otimes\cdots\otimes a_{n}&& \text{ if }
i=0\\&a_0\otimes\cdots\otimes a_i a_{i+1}\otimes\cdots\otimes a_{n}&&\text{ if }1\le i<n\\
&(-1)^{| a_n|| a_0a_1\cdots  a_{n-1}|} a_n a_0\otimes a_1\otimes\cdots\otimes a_{n-1}
&&\text{ if }i=n.\endaligned\right.
\end{equation}
Note that each $d_n^i$ is well-defined and $\partial_{n+1}\partial_{n}=0$. This complex is a projective resolution of the standard graded $\sa$-bimodule $\sa$.
\end{definition}

\begin{definition} Let $\cm$ be a graded $\sa$-bimodule. The \textit{Hochschild
homology groups} $\HH_*({\sa},{\cal M})$ of $\sa$ with
coefficients in $\cm$ are defined to the homology groups of the chain complex
\begin{align*} \cdots \stackrel{\partial_{n+1}}\longrightarrow C_n(\sa,\cm) \stackrel{\partial_n}{\longrightarrow}
\cdots \stackrel{\partial_2}{\longrightarrow}C_1(\sa,\cm)
\stackrel{\partial_1}{\longrightarrow}C_0(\sa, \cm)
\longrightarrow 0,\end{align*} where $\partial$ is defined as (\ref{Equ:Standard-complex}) and
\begin{align*}
  C_n(\sa,\cm)=\cm\otimes_{\sa^e}\sn_n(\sa)=\bigoplus_{n+1\text{-tuples}}\cm(X_0,X_n)\otimes\sa(X_{n},X_{n-1})
\otimes \dots \otimes \sa(X_{1},X_{0}).
\end{align*}
\end{definition}
\vspace{-0.4truecm}
\begin{remark}\label{Rem:HH_0} If $\sa$ is trivially graded, then the Hochschild homology groups $\HH_*(\sa,\sa)$ coincide with the ones defined in the ungraded situation (ref.~\cite[\S~17]{Mitchell}). One always has \begin{equation*}
  \HH_0(\sa,\sa)=\sa/[\sa, \sa]_{\gr},
\end{equation*} where $[\sa,\sa]_{\gr}$ is the subsuperspace spanned by all graded commutators \begin{align*}[\ga, \gb]_{\gr}=\ga\gb-(-1)^{|\ga||\gb|}\gb\ga\qquad  \text{ for morphisms }\ga, \gb \text{ of }\sa.\end{align*} As in the ungraded case, the Hochschild homology of a superadditive category can be defined as a derived functor. Indeed, $\HH_*(\sa, \sa)=\Tor_*^{\sa^e}(\sa, \sa)$.\end{remark}

 \begin{definition} Let $\cm$ be a graded $\sa$-bimodule. The \textit{Hochschild
cohomology groups} $\HH^*({\sa},{\cal M})$ of $\sa$ with
coefficients in $\cm$ are defined to the cohomology groups of the cochain complex
\begin{align*} 0 \longrightarrow C^0(\sa,\cm)\stackrel{\partial^0}{\longrightarrow}
C^1(\sa,\cm)\stackrel{\partial^1}{\longrightarrow}
\dots \stackrel{\partial^{n\!-\!1}}{\longrightarrow} C^n(\sa,\cm) \stackrel{\partial^n}{\longrightarrow} \dots, \end{align*}
where \begin{align*}C^n(\sa,\cm)=\Hom(\sn_n,\cm)= \prod_{n\!+\!1\text{-tuples}} \Hom_K \left(\sa(X_{n},X_{n-1})\otimes \dots \otimes \sa(X_{1},X_{0}),{\cal M}(X_{n},
X_0)\right)\end{align*} and $\partial^n=\sum_{i=0}^n(-1)^id_i^n$ with $d_n^i: C^{n}(\sa, \cm)\rr C^{n\!+\!1}(\sa,\cm)$ defined as following:
$$(d_n^i\phi)(a_1, \cdots, a_{n+1})=\left\{\aligned&
(-1)^{|a_1||\phi|}a_1\phi(a_2, \cdots, a_{n+1})&& \text{ if }
i=0\\&\phi(a_1,\cdots, a_ia_{i+1},\cdots, a_{n+1})&&\text{ if }1\le i<n\\
&\phi(a_1, \cdots, a_{n})a_{n+1} &&\text{ if }i=n.\endaligned\right. $$
Note that each linear map of the family $d_n^i(\phi)$ is well-defined for $\phi\in C^n(\sa,\cm)$ and $\partial^n\partial^{n+1}=0$.
\end{definition}
Recall that the graded center $\cz_{\gr}(\sa)$ of $\sa$ is the ring of
endomorphisms of the identity functor to itself. More
concretely $\cz_{\gr}(\sa)$ is a graded ring whose homogeneous
elements consist of tuples of homogeneous elements $(\phi_X)_X$ with $X\in \sa$ and
$\phi_X\in\sa(X,X)$ such that for any homogeneous
$\psi\in\sa(X, Y )$ one has $\phi_X\psi = (-1)^{|\psi
||\phi_X|}\psi\phi_Y$. For example, if $A$ is a finite dimensional
superalgebra then $\cz_{\gr}(\gr_{\bullet} A)$ is canonically
isomorphic to the graded center  of the superalgebra $A$ itself, that is,
$\cz_{\gr}(\gr_{\bullet} A)\negmedspace:=\mathrm{Span}_K\{a\in
A|ab=(-)^{|a||b|}ba\, \text{ for all }b\in A\}$.
\begin{point}{}* \label{Point:HH^0}
It is clear that
$$\HH^0({\sa}, {\cal M})\!=\! \{ (\mathfrak{m}_X)_X\!\mid\!
\mathfrak{m}_X \!\in\! \cm(X,X) \text{ and }\phi\mathfrak{m}_X\!=\!(\!-1\!)^{|\mathfrak{m}_X||\phi|}
\mathfrak{m}_Y \phi \text{ for all }\phi\!\in\!\sa(Y,X), Y\!\!\in\!\sa\}.$$
Hence $\HH^0(\sa, \sa)=\cz_{\gr}(\sa)$.
Note that the standard bimodule $\sa$ has a
resolution by tensor powers of $\sa$ which are projective graded
bimodules. Applying the functor $\Hom (-, {\cal M})$ provides
precisely the cochain complex above. Consequently
Hochschild cohomology is an instance of an $\Ext$
functor, namely $\HH^{*}(\sa,\sa)=\Ext^*_{\sa^e}(\sa,\sa)$.
\end{point}

 \begin{point}{}*Now let $t_n$ be the generator of $\bz/(n+1)\bz$  and define
its action on $\sn_{n+1}(\sa)$ by \begin{align*}t_n(a_0, a_1,
\cdots, a_n)=(-1)^{n+|a_n|(|a_0|+\cdots+|a_{n-1}|)}(a_n, a_0,
\cdots, a_{n-1}).\end{align*}Then  for all $n\ge 1$, the operators
$t_n$, $N_n:=1+t_n+\cdots+t_n^n$, $\partial_n$, and $\bar{\partial}_n:=\sum_{i=0}^{n-1}d_n^i$
satisfy identities $t_n^{n+1}=\mathrm{id}$,
$(1-t_{n-1})\bar{\partial}_n=\partial_n(1-t_n)$, and
$\bar{\partial}_nN_n=N_{n-1}\partial_n$.\end{point}

\begin{point}{}*\label{Point:Cyclic-bicomplex} Now we get the
 following \textit{cyclic bicomplex} $CC_{**}(\sa)$
 \begin{align*}&\xymatrix@C=10mm{
  \vdots\ar[d]_{\partial_2}&\vdots\ar[d]_{-\bar{\partial}_2}&\vdots\ar[d]_{\partial_2}&\vdots\ar[d]_{-\bar{\partial}_2}\\
  \sn_1(\sa)\ar[d]_{\partial_1}& \sn_1(\sa)\ar[d]_{-\bar{\partial}_1}\ar[l]_{1-t_2}
  & \sn_1(\sa)\ar[d]_{\partial_1} \ar[l]_{N_2} &\sn_1(\sa) \ar[d]_{-\bar{\partial}_1}\ar[l]_{1-t_2}&\cdots\ar[l]_{N_2}\\
 \sn_0(\sa)\ar[d]_{\partial_0}& \sn_0(\sa)\ar[d]_{-\bar{\partial}_0}\ar[l]_{1-t_1}
  & \sn_0(\sa)\ar[d]_{\partial_0} \ar[l]_{N_1} &\sn_0(\sa) \ar[d]_{-\bar{\partial}_0}\ar[l]_{1-t_1}&\cdots\ar[l]_{N_1}\\
  \sa& \sa\ar[l]_{1-t_0}
  &  \sa\ar[l]_{N_0} &\sa\ar[l]_{1-t_0}&\cdots\ar[l]_{N_0}\\}\end{align*}
   By convention, the standard graded bimodule $\sa$ being in the
left-hand corner is of bidegree $(0, 0)$, so
$CC_{pq}(\sa)=\sn_{q}(\sa)$. Note that columns of $CC_{**}(\sa)$ are copies of the
Hochschild-Mithcell complex $\sn(\sa)$ of $\sa$. The cyclic complex $C(\sa)$ is by
definition the total complex of this double complex, that is, the space
of cyclic $n$-chains $ C(\sa)_n =\bigoplus_{k\geq 0}
\sn_{n-2k}(\sa)$ with \textit{cyclic boundary} $\partial+ B$  where
$B=(1-t)s\sum_{i=0}^nt^i$ and $s$ is the \textit{extra degeneracy} defined by \begin{align*}s(\ga_0\otimes\ga_1\otimes\cdots\otimes\ga_n)=1\otimes\ga_0\otimes\ga_1\otimes\cdots\otimes\ga_n.\end{align*}
\end{point}
\begin{definition} The  \textit{cyclic homology groups}
$\HC_*(\sa)$ of the superadditive category $\sa$ is defined to be the
homology groups of the cyclic complex $C(\sa)$, i.e.,
$\HC_n(\sa):=\mathrm{H}_n(\mathrm{Tot}\,CC(\sa))$.\end{definition}

Now the arguments of \cite[\S\,1]{Loday-Quillen} can be adopted to the case of superadditive categories.  More precisely  there is a so-called \textit{Connes periodicity operator}
\begin{align*}S : C_n(\sa) \rr C_{n-2}(\sa), \,(\ga_n, \ga_{n-2},
\ga_{n-4}, \cdots)\mapsto(\ga_{n-2}, \ga_{n-4}, \cdots) \text{ where }
\ga_i\in \sn_i(\sa).\end{align*} Let $I$ be the inclusion of the Hochschild-Mitchell complex as
the first column of the cyclic complex. Then the Connes periodicity operator induces the following short exact sequence of complexes \begin{equation}\xymatrix@C=0.5cm{
  0 \ar[r] & \sn(\sa) \ar[r]^{I} &C(\sa) \ar[r]^{S} & C(\sa)[2]\ar[r]&0 }\end{equation}

Applying the snake Lemma, we obtain the following \textit{Gysin-Connes exact sequence} \begin{equation}\label{Equ:Gysin-Connes-Exact-Sequ} \xymatrix@C=0.5cm{ \cdots \ar[r] &
\HH_n(\sa) \ar[rr]^{I} &&\HC_n(\sa)\ar[rr]^{S} && \HC_{n-2}(\sa) \ar[rr]^{B} &&
\HH_{n-1}(\sa) \ar[rr]^{I} && \cdots}.\end{equation}

We construct now the cyclic cohomology of a superadditive category.

\begin{point}{}* Dualizing the cyclic bicomplex $CC_{**}(\sa)$ introduced in Section~\ref{Point:Cyclic-bicomplex}, we obtain a bicomplex of cochains $CC^{**}(\sa)$ such that $CC^{pq}(\sa)=C^{q}(\sa)$. It has vertical differential maps $\partial^*$ or $\bar{\partial}*: CC^{pq}(\sa)\rightarrow CC_{pq+1}(\sa)$ and horizontal differential maps $(1-t)^*$ or $N^*: CC^{pq}(\sa)\rightarrow CC_{p+1q}(\sa)$. By definition the cyclic cohomology groups of $\sa$ is the cohomology groups of the cochain complex $\mathrm{Tot\,}CC^{**}(\sa)$, that is, $\HC^{n}(\sa):=H^{n}(\mathrm{Tot\,}CC^{**}(\sa))$.
\end{point}

Note that if $\sa$ has just one object, then the Hochschild and cyclic (co)homology theories of $\sa$ are the usual Hochschild and cyclic (co)homology theories of superalgebras (ref. \cite{kassel,Zhao}). More generally, we have the following facts.
\begin{proposition}\label{Prop:Category-Algebras-HH-HC}
Let $\sa$ be a superadditive category with finite number of objects and let $\Lambda_{\sa}$ be the
corresponding superalgebra. Then
\noindent\begin{enumerate}\item $\HH^*({\sa}, M) =\HH^*(\Lambda_{\sa}, M)$  and $\HH_*({\sa}, M) = \HH_*(\Lambda_{\sa}, M)$ where $\HH_*(\Lambda_{\sa}, M)$  and $\HH^*(\Lambda_{\sa}, M)$) are respectively  the Hochschild homology  and cohomology of  $\Lambda_{\sa}$ with coefficients in graded $\Lambda_{\sa}$-bimodule $M$.
\item $\HC^*({\sa}) =\HC^*(\Lambda_{\sa})$ and
$\HC_*({\sa}) = \HC_*(\Lambda_{\sa})$ where
$\HC_*(\Lambda_{\sa})$ and  $\HC^*(\Lambda_{\sa})$ are respectively the cyclic homology and cohomology of $\Lambda_{\sa}$.\end{enumerate}
\end{proposition}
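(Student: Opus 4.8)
The plan is to establish the equivalence between the Hochschild-Mitchell complex of the superadditive category $\sa$ (with finitely many objects) and the usual Hochschild complex of the associated superalgebra $\Lambda_{\sa} = \bigoplus_{X,Y\in\sa}\sa(X,Y)$, and then note that this identification is compatible with all the structural operators ($t_n$, $N_n$, $B$, $s$, and the periodicity $S$) that define cyclic (co)homology. The key observation, already recorded in Remark~\ref{Remark:superadditive-algebra}(i), is that graded $\Lambda_{\sa}$-(bi)modules and graded $\sa$-(bi)modules coincide; so it suffices to compare the two chain/cochain complexes degreewise and check that the differentials agree.

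First I would treat the Hochschild homology case. The point is that $\Lambda_{\sa}$ is a superalgebra with a complete set of orthogonal idempotents $\{1_X\}_{X\in\sa}$, and for such an algebra one has a canonical decomposition $\Lambda_{\sa}\otimes\cdots\otimes\Lambda_{\sa} = \bigoplus 1_{X_n}\Lambda_{\sa}1_{X_{n-1}}\otimes\cdots\otimes 1_{X_1}\Lambda_{\sa}1_{X_0}$, where the sum runs over tuples of objects, and $1_X\Lambda_{\sa}1_Y = \sa(Y,X)$ by construction. Under this decomposition the degenerate summands (those where adjacent idempotents fail to match) contribute nothing to the reduced tensor product over $\Lambda_{\sa}^e$, and what survives is exactly $C_n(\sa,\cm) = \bigoplus_{(n+1)\text{-tuples}}\cm(X_0,X_n)\otimes\sa(X_n,X_{n-1})\otimes\cdots\otimes\sa(X_1,X_0)$. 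I would then verify that the face maps $d_n^i$ of~(\ref{Equ:Standard-complex}) coincide termwise with the usual Hochschild faces of $\Lambda_{\sa}$, including the Koszul sign $(-1)^{|a_n||a_0a_1\cdots a_{n-1}|}$ on the last face, which is precisely the sign appearing in the superalgebra Hochschild differential. This gives an isomorphism of chain complexes, hence $\HH_*(\sa,M)=\HH_*(\Lambda_{\sa},M)$. The cohomology case is formally dual: the product $\prod_{(n+1)\text{-tuples}}\Hom_K(\sa(X_n,X_{n-1})\otimes\cdots\otimes\sa(X_1,X_0),\cm(X_n,X_0))$ is identified with $\Hom_K(\bar\Lambda_{\sa}^{\otimes n},M)$ once one uses the idempotents to split off the matching-object summands, and the coface maps $d_n^i$ match term by term with the superalgebra Hochschild cofaces.

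For part~(2), the plan is to observe that the cyclic (co)homology is built functorially from the Hochschild complex together with the cyclic operators. Having fixed the isomorphism of complexes in part~(1), I would check that this isomorphism intertwines the cyclic generator $t_n$, the norm $N_n$, the extra degeneracy $s$, and hence the Connes operator $B$ and the periodicity $S$, since all of these are defined by the same formulas (with the same Koszul signs) in both settings. This means the two cyclic bicomplexes $CC_{**}(\sa)$ and $CC_{**}(\Lambda_{\sa})$ are isomorphic as bicomplexes, so their total complexes have isomorphic homology, giving $\HC_*(\sa)=\HC_*(\Lambda_{\sa})$; dualizing yields the cohomology statement $\HC^*(\sa)=\HC^*(\Lambda_{\sa})$.

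**The main obstacle** I anticipate is purely bookkeeping rather than conceptual: one must be scrupulous about the Koszul signs and the precise role of the idempotents when passing from the "decorated" tensor products $\sa(X_n,X_{n-1})\otimes\cdots$ to the reduced tensor powers $\bar\Lambda_{\sa}^{\otimes n}$ over the semisimple subalgebra $\bigoplus_X K\,1_X$. Concretely, the usual Hochschild complex of a superalgebra with a separable subalgebra of idempotents already computes the relative (hence absolute) Hochschild (co)homology, and the nontrivial summands $1_{X_i}\Lambda_{\sa}1_{X_{i-1}}$ are exactly the morphism superspaces; the sign conventions in~(\ref{Equ:Standard-complex}) and in $t_n,B,s$ were chosen to be the superalgebra conventions, so the verification reduces to matching formulas. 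I would therefore present the argument as constructing the explicit degreewise isomorphism, checking commutativity with each of the defining operators, and invoking Remark~\ref{Remark:superadditive-algebra}(i) to guarantee the coefficient modules correspond, leaving the sign-level verifications as routine.
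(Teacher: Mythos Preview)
Your approach for part~(1) is essentially the paper's: both use the semisimple subalgebra $E=\prod_X K1_X\subset\Lambda$ to replace the absolute bar resolution of $\Lambda$ by the relative one $\cdots\to\Lambda\otimes_E\Lambda\otimes_E\cdots\otimes_E\Lambda\to\cdots$, observe this is still a $\Lambda^e$-projective resolution since $E^e$ is semisimple, and then identify the resulting complex (after applying $M\otimes_{\Lambda^e}-$ or $\Hom_{\Lambda^e}(-,M)$) with the Hochschild--Mitchell complex of $\sa$ via $1_X\Lambda 1_Y=\sa(X,Y)$. Your phrasing of the idempotent decomposition is slightly imprecise in one place --- the non-matching summands are not literally zero in the absolute complex $M\otimes\Lambda^{\otimes n}$; rather one passes to the relative complex over $E$, which is quasi-isomorphic --- but you correctly identify this in your ``obstacle'' paragraph, so there is no real gap.

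For part~(2) the routes genuinely differ. The paper does not check compatibility with $t_n$, $N_n$, $s$, $B$, $S$ one by one; instead it simply invokes the Gysin--Connes exact sequence~(\ref{Equ:Gysin-Connes-Exact-Sequ}) and deduces the cyclic isomorphism inductively from the Hochschild one via the five lemma. Your plan to identify the entire cyclic bicomplexes directly is also correct and gives a chain-level isomorphism (hence a more explicit and canonical identification), but costs more sign-bookkeeping; the paper's route is shorter but less constructive.
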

\begin{proof}
Let $\Lambda := \Lambda_{\sa}$ and consider the semisimple
sub-superalgebra $E= \prod K1_X$ (concentrated in degree 0) of $\Lambda$. Note that
any $\bz_2$-graded $E$-bimodule is graded projective since the enveloping superalgebra of $E$
is still semisimple. Consequently there is a $\bz_2$-graded projective resolution of $\Lambda$ as
a graded $\Lambda$-bimodule given by
$$ \dots \longrightarrow \Lambda \otimes_E \Lambda \otimes_E \dots \otimes_E \Lambda
\longrightarrow \dots \longrightarrow \Lambda \otimes_E \Lambda
\longrightarrow \Lambda \longrightarrow 0.$$ Applying the functor
$M\otimes_{\Lambda^e}- $ to this resolution and
considering the canonical superspace isomorphism
$$M\otimes_{\Lambda^e}(\Lambda\otimes_E - \otimes_E\Lambda)=
M\otimes_E-,$$ we obtain a chain complex computing
$\HH_*(\Lambda, M)$ which coincides with the complex we have
defined for the Hochschild homology of superalgebras (see \cite[\S3.1]{Zhao}).

Applying the functor $\Hom_{\Lambda^e}(-,M)$ to this resolution and
considering the canonical superspace isomorphism
$$\Hom_{\Lambda^e}(\Lambda\otimes_E - \otimes_E\Lambda, M)=
\Hom_{E^e}(-,M),$$ we obtain a cochain complex computing
$\HH^*(\Lambda, M)$ which coincides with the complex we have
defined for the Hochschild cohomology of superalgebras (see  \cite[\S3.1]{Zhao}).
The assertion for cyclic (co)homology follows by applying the Gysin-Connes exact sequence (\ref{Equ:Gysin-Connes-Exact-Sequ}).
\end{proof}

\section{Graded Morita invariance}\label{Sec:Morita}

In this section  we give a brief description of the graded Morita theory for superadditive categories and prove the graded Morita invariance of the Hochschild and cyclic (co)homologies.

 Let $\sa$ be a superadditive category. Denote by $\Mod \sa$ the category  of all (ungraded) $\sa$-modules, that is, the category consists of all additive covariant functors from $\sa$ to $\Mod K$ and morphisms being the natural transformations. Denote by $\mathbb{F}$ the \textit{forgetful functor} from $\Gr\sa$ to $\Mod\sa$. Now let $\sb$ be another superadditive category. A linear additive functor
  $\cf\negmedspace:\Gr\sa \longrightarrow\Gr\sb$
 is an {\em even functor} provided it commutes with the charge parity $\pi$.
An even functor $\cf\negmedspace:\Gr\sa \longrightarrow\Gr\sb$ is said to be an \textit{even equivalence} provided there is an even functor $\cg\negmedspace:\Gr\sb \longrightarrow\Gr\sa$ such that $\cf\cg=\mathrm{Id}_{\Gr \sa}$ and $\cg\cf=\mathrm{Id}_{\Gr\sb}$. In this case we say that $\sa$ and $\sb$ are \textit{even equivalent}.

\begin{definition}Two superadditive categories $\sa$ and $\sb$ are said to be \textit{graded Morita equivalent} if  $\Gr\sa$ is even equivalent to $\Gr\sb$.
\end{definition}

Let $\sa$ be a superadditive category. \textit{An idempotent} in $\sa$ is an even morphism $\epsilon\!\in\!\sa(X,X)$ such that $\epsilon^2=\epsilon$. An idempotent $\epsilon\!\in\!\sa(X,X)$ is \textit{split} if there is an object $Y\in\sa$ and homogeneous morphisms  $\ga\in\sa(X,Y)$, $\gb\in\sa(Y,X)$ such that $\ga\gb=\epsilon$ and $\gb\ga=1_Y$.  We say that $\sa$ is \textit{idempotent complete} if every idempotent is split.

\begin{remark}Every superadditive category $\sa$ be fully faithfully embedded into an idempotent complete superadditive category $\widehat{\sa}$. \end{remark}

The following construction of $\widehat{\sa}$ is essential due to Freyd (cf.~\cite{Mitchell1965}):
  The objects of $\widehat{\sa}$ are the pairs $(X,\epsilon)$ consisting of an object $X$ of $\sa$ and an idempotent $\epsilon\in\sa(X,X)$ while the sets of morphisms are
  \begin{equation*}
    \widehat{\sa}\left((X,\epsilon), (Y,\varepsilon)\right)=\epsilon\sa(X,Y)\varepsilon
  \end{equation*}with composition induced by the composition in $\sa$. It is easy to see that $\widehat{\sa}$ is a superadditive category  with biproduct $(X, \epsilon)\oplus(Y,\varepsilon) = (X\oplus Y, \epsilon\oplus\varepsilon)$ and obviously the functor
$\mathrm{hat}_{\sa}: \sa\rr\widehat{\sa}$ given by $\mathrm{hat}_\sa(X)= (X, 1_X)$ and $\mathrm{hat}_{\sa}(\phi) =\phi$ is is a fully faithful graded additive functor. Furthermore  $\widehat{\sa}$ is idempotent complete, which is called the {\it idempotent complete} of $\sa$. Note that if $\sa$ is idempotent complete then $\mathrm{hat}_{\sa}: \sa\rr \widehat{\sa}$ is an equivalence of superadditive categories.

 The following facts can be proved by the similar arguments as those of \cite[\S6]{Buhler} for $K$-categories.

 \begin{proposition}\label{Prop:hat-equiv} Let $\sa$ and $\sb$ be superadditive categories. If $\sb$ is idempotent complete then $\mathrm{hat}_{\sa}$ induces an equivalence $\mathrm{hat}:\mathrm{Gr}(\widehat{\sa}, \sb)\stackrel{\sim}\longrightarrow\mathrm{Gr}(\sa, \sb)$. In particular, $\mathrm{hat}$ induces an equivalence of categories $\mathrm{hat}:\mathrm{Gr}\widehat{\sa}^e \stackrel{\sim}\longrightarrow\mathrm{Gr}\sa^e$.
 \end{proposition}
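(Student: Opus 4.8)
The plan is to prove that precomposition with $\mathrm{hat}_\sa$, namely the functor $\mathrm{hat}=(-)\circ\mathrm{hat}_\sa\colon \Gr(\widehat\sa,\sb)\rr\Gr(\sa,\sb)$, is essentially surjective and fully faithful; the last assertion then follows by specialization. Following B\"uhler's argument for $K$-categories, I would first treat essential surjectivity by explicitly extending a graded functor $\cg\colon\sa\rr\sb$ to $\widehat\sa$. For an object $(X,\epsilon)$ of $\widehat\sa$ the element $\cg(\epsilon)\in\sb(\cg X,\cg X)$ is an even idempotent, since $\epsilon$ is even and $\cg$ is graded, so $\cg(\epsilon)^2=\cg(\epsilon^2)=\cg(\epsilon)$. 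As $\sb$ is idempotent complete I fix a splitting, i.e.\ an object $\widehat\cg(X,\epsilon)$ and homogeneous morphisms $a_{(X,\epsilon)}\colon\cg X\rr\widehat\cg(X,\epsilon)$, $b_{(X,\epsilon)}\colon\widehat\cg(X,\epsilon)\rr\cg X$ with $a_{(X,\epsilon)}b_{(X,\epsilon)}=\cg(\epsilon)$ and $b_{(X,\epsilon)}a_{(X,\epsilon)}=1$, choosing the trivial splitting $a=b=1$ whenever $\epsilon=1_X$ so that $\widehat\cg(X,1_X)=\cg X$. On a morphism $\epsilon\phi\varepsilon\colon(X,\epsilon)\rr(Y,\varepsilon)$ I set $\widehat\cg(\epsilon\phi\varepsilon):=b_{(X,\epsilon)}\,\cg(\epsilon\phi\varepsilon)\,a_{(Y,\varepsilon)}$.

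Then I would check that $\widehat\cg$ is a graded functor. Functoriality uses only $a_{(Y,\varepsilon)}b_{(Y,\varepsilon)}=\cg(\varepsilon)$ to absorb the middle factor when composing two morphisms, together with $ba=1$ to get $\widehat\cg(1_{(X,\epsilon)})=1$. Gradedness is automatic: from $a_{(X,\epsilon)}b_{(X,\epsilon)}=\cg(\epsilon)$ being even one has $|a_{(X,\epsilon)}|=|b_{(X,\epsilon)}|$, whence $\widehat\cg(\epsilon\phi\varepsilon)$ has degree $|b_{(X,\epsilon)}|+|\phi|+|a_{(Y,\varepsilon)}|=|\phi|=|\epsilon\phi\varepsilon|$, so $\widehat\cg$ is an even linear map on each Hom-superspace and commutes with the suspension $\cs$. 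Finally the trivial splittings give $\widehat\cg\circ\mathrm{hat}_\sa=\cg$ on the nose, proving essential surjectivity.

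For full faithfulness the key observation is that every object $(X,\epsilon)$ of $\widehat\sa$ is a retract of $\mathrm{hat}_\sa(X)=(X,1_X)$: the idempotent $\epsilon$, read as a morphism $i\colon(X,\epsilon)\rr(X,1_X)$ and as a morphism $r\colon(X,1_X)\rr(X,\epsilon)$, satisfies $ir=1_{(X,\epsilon)}$ and $ri=\epsilon$, and both $i,r$ are even. Given $\cf,\cf'\in\Gr(\widehat\sa,\sb)$ and a graded natural transformation $\eta\colon\cf\circ\mathrm{hat}_\sa\rr\cf'\circ\mathrm{hat}_\sa$ with components $\eta_X\colon\cf(X,1_X)\rr\cf'(X,1_X)$, I define $\widehat\eta_{(X,\epsilon)}:=\cf(i)\,\eta_X\,\cf'(r)$. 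Naturality of $\eta$ and functoriality of $\cf,\cf'$ show that $\widehat\eta$ is a graded natural transformation $\cf\rr\cf'$ restricting along $\mathrm{hat}_\sa$ to $\eta$, which gives surjectivity on Hom-sets. Conversely, for any $\widehat\eta\colon\cf\rr\cf'$ the naturality squares for $i$ and $r$ together with $\cf(i)\cf(r)=\cf(ir)=1$ force $\widehat\eta_{(X,\epsilon)}=\cf(i)\,\widehat\eta_{(X,1_X)}\,\cf'(r)$, so $\widehat\eta$ is determined by its restriction; this gives injectivity. Hence $\mathrm{hat}$ is an equivalence.

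For the last assertion I would apply the first part with $\sa$ replaced by the enveloping category $\sa^e$ and $\sb=\Gr K$: since $\Gr K$ is abelian, every even idempotent splits (its image is a subsuperspace), so $\Gr K$ is idempotent complete, and the first part yields $\Gr\widehat{\sa^e}\simeq\Gr\sa^e$. Because every object of $(\widehat\sa)^e$ is a retract of one in the image of $\sa^e$, the categories $(\widehat\sa)^e$ and $\sa^e$ share the same idempotent completion, $\widehat{(\widehat\sa)^e}=\widehat{\sa^e}$, whence $\Gr\widehat\sa^e\simeq\Gr\widehat{(\widehat\sa)^e}=\Gr\widehat{\sa^e}\simeq\Gr\sa^e$. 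The step I expect to require the most care is the essential surjectivity: verifying that the extension $\widehat\cg$ is well defined (independent, up to canonical isomorphism, of the chosen splittings) and is genuinely a graded functor, which is where the parity bookkeeping of the superadditive setting must be handled, in contrast to B\"uhler's purely $K$-linear computation.
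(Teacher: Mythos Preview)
Your approach is exactly what the paper has in mind: it offers no argument of its own beyond ``by the similar arguments as those of \cite[\S6]{Buhler},'' so an explicit adaptation of B\"uhler's proof to the graded setting is precisely what is called for, and your treatment of essential surjectivity, full faithfulness, and the passage to enveloping categories follows that template correctly.

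There is, however, a slip at the very spot you single out as delicate. From $a_{(X,\epsilon)}b_{(X,\epsilon)}=\cg(\epsilon)$ even you correctly deduce $|a_{(X,\epsilon)}|=|b_{(X,\epsilon)}|$, but you then assert that $|b_{(X,\epsilon)}|+|a_{(Y,\varepsilon)}|=0$; this involves the splittings at \emph{two different} objects, and nothing you have said forces them to have the same parity. The paper's definition of a split idempotent only requires the splitting maps to be \emph{homogeneous}, so in principle one idempotent could split via odd maps and another via even maps, in which case $\widehat\cg$ would shift degrees on morphisms between them. The remedy is painless in the case you actually use: for $\sb=\Gr K$ every even idempotent on a superspace splits through its image by \emph{even} maps (projection and inclusion), so one may and should choose all $a_{(X,\epsilon)},b_{(X,\epsilon)}$ to be even; once you say this, your degree computation becomes $0+|\phi|+0=|\phi|$ and the rest of the argument stands. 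For general idempotent complete $\sb$ you would need to either assume even splittings exist, or assume $\sb$ admits a parity shift so that an odd splitting $(Y,a,b)$ can be traded for an even one $(\Pi Y,a',b')$.
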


\begin{point}{}* \label{Point:Matrix-equ.}
Given any superadditive category $\sa$, we can form the category $\mathrm{Mat}\sa$ whose objects are finite sequence of objects in $\sa$ and where a morphism from $(X_1, \cdots, X_m)$ to $(Y_1, \cdots, Y_n)$ is an $m\times n$ matrix $[\ga_{ij}]$ with $\ga_{ij}\in\sa(X_i,Y_j)$, composition is defined by ordinary matrix multiplication. The category $\mathrm{Mat}(\sa)$ is called an \textit{additive completion} of $\sa$. Then $\mathrm{Mat}\sa$ is a superadditive category with finite product and contains $\sa$ as a full superadditive subcategory by identifying $X\in\sa$ with the matrix whose only nonzero entry is $1_X$. Moreover, the functor $\mathrm{mat}: \sa\rr\mathrm{Mat}\sa$ defined by $X\mapsto (X)$ is a graded functor, and if $\sb$ is small superadditive category then $\mathrm{mat}$ induces an equivalence $$\mathrm{mat}:\quad\mathrm{Gr}(\mathrm{Mat}\sa, \sb)\stackrel{\sim}\longrightarrow\mathrm{Gr}(\sa, \sb).$$
In particular, $\mathrm{mat}$ induces an equivalence of categories $\mathrm{mat}: \mathrm{Gr}(\mathrm{Mat}\sa)^e\stackrel{\sim}\longrightarrow\mathrm{Gr}\sa^e$.\end{point}

 By Proposition~\ref{Prop:hat-equiv} and Section~\ref{Point:Matrix-equ.},
two superadditive categories are graded Morita equivalent if and only if their completions (i.e., the additive completion and the idempotent completions) are graded Morita equivalent. Note that these completions are superadditive  categories with finite products, so they are graded Morita equivalent if and only if they are equivalent according to Section~\ref{Point:Matrix-equ.}.

Using the similar arguments as that of \cite{Cibils-Solotar}, we can prove a super-version of \cite[Theorem~4.7]{Cibils-Solotar}. The details will
appear elsewhere.

\begin{theorem}\label{Them:Morita-addi-idem}
  Any graded Morita equivalence between superadditive categories is a composition of equivalences, additivization and idempotent completions of superadditive categories.
\end{theorem}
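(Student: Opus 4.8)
The plan is to show that any even equivalence $\cf\colon\Gr\sa\stackrel{\sim}\longrightarrow\Gr\sb$ can be decomposed into the three elementary operations (equivalence, additivization $\mathrm{Mat}$, idempotent completion $\widehat{(-)}$) by reducing to the case where both categories have finite products, where the classification of graded Morita equivalences is most transparent. The key reduction is already assembled in the excerpt: by Proposition~\ref{Prop:hat-equiv} and Section~\ref{Point:Matrix-equ.}, passing from $\sa$ to $\mathrm{Mat}\,\widehat{\sa}$ (or to $\widehat{\mathrm{Mat}\,\sa}$) does not change the graded Morita equivalence class, since $\mathrm{Gr}(\mathrm{Mat}\,\widehat{\sa})^e\simeq\mathrm{Gr}\,\sa^e$ and the graded module categories agree. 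Thus the strategy is: first replace $\sa$ and $\sb$ by their completions $\widehat{\mathrm{Mat}\,\sa}$ and $\widehat{\mathrm{Mat}\,\sb}$, which are idempotent complete superadditive categories admitting finite products, and observe that the original functors $\sa\to\widehat{\mathrm{Mat}\,\sa}$ and $\sb\to\widehat{\mathrm{Mat}\,\sb}$ are by construction compositions of an additivization followed by an idempotent completion.

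Next I would invoke the super-version of \cite[Theorem~4.7]{Cibils-Solotar} announced just before the statement, which tells us that for superadditive categories with finite products, graded Morita equivalence coincides with ordinary equivalence (this is exactly the sentence ``they are graded Morita equivalent if and only if they are equivalent''). Concretely, once $\sa$ and $\sb$ are graded Morita equivalent and we have replaced them by their completions, the even equivalence $\Gr\,\widehat{\mathrm{Mat}\,\sa}\simeq\Gr\,\widehat{\mathrm{Mat}\,\sb}$ restricts, via the graded Yoneda embedding sending an object $X$ to the representable functor $\sa(X,-)$, to a genuine graded equivalence of the categories of finitely generated graded projectives, and since the completions are recovered as the idempotent-closures of these projective generators, one gets a graded equivalence $\widehat{\mathrm{Mat}\,\sa}\stackrel{\sim}\longrightarrow\widehat{\mathrm{Mat}\,\sb}$ of superadditive categories itself.

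Assembling the pieces, the original graded Morita equivalence factors as
\begin{align*}
\sa\;\xrightarrow{\ \mathrm{mat}\ }\;\mathrm{Mat}\,\sa\;\xrightarrow{\ \mathrm{hat}\ }\;\widehat{\mathrm{Mat}\,\sa}\;\xrightarrow{\ \sim\ }\;\widehat{\mathrm{Mat}\,\sb}\;\xleftarrow{\ \mathrm{hat}\ }\;\mathrm{Mat}\,\sb\;\xleftarrow{\ \mathrm{mat}\ }\;\sb,
\end{align*}
where the outer arrows are an additivization and an idempotent completion, and the middle arrow is an honest equivalence of superadditive categories; the leftward arrows on the $\sb$ side are inverted using that $\mathrm{hat}$ and $\mathrm{mat}$ induce equivalences on the relevant graded functor categories, so they may be absorbed into the same list of elementary operations. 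This exhibits the given graded Morita equivalence as the required composition.

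The main obstacle I expect is the middle step: establishing the super-analogue of \cite[Theorem~4.7]{Cibils-Solotar}, i.e.\ that an even equivalence between the graded module categories of two \emph{finite-product} superadditive categories is induced by a graded equivalence of the categories themselves. The subtlety is purely the Koszul sign bookkeeping --- one must check that the Yoneda-type reconstruction of $\widehat{\mathrm{Mat}\,\sa}$ from its projective generators inside $\Gr\,\widehat{\mathrm{Mat}\,\sa}$ respects the parity-change functor $\pi$ and the odd components of the Hom-superspaces, so that the reconstructed equivalence is genuinely a graded (even) functor and not merely an ungraded one. This is exactly the point the paper defers with ``The details will appear elsewhere,'' so in the write-up I would state the reduction carefully and cite that deferred super-version rather than reprove it from scratch.
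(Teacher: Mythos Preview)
Your proposal is correct and in fact considerably more detailed than the paper's own treatment: the paper does not give a proof at all, merely stating that the result follows by adapting the arguments of \cite{Cibils-Solotar} to the super setting and deferring the details elsewhere. Your outline --- reduce to completions via Proposition~\ref{Prop:hat-equiv} and Section~\ref{Point:Matrix-equ.}, then invoke the super-analogue of \cite[Theorem~4.7]{Cibils-Solotar} on the completed categories --- is exactly the intended route, and you have correctly identified both the key reduction and the point where the genuine work (the sign-compatible Yoneda reconstruction) is being postponed.
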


Given two superadditive categories $\sa$ and $\sb$, a graded left
$\sb$-module $\cn$ and a graded functor $\cf\negmedspace: \sa\rr\sb$,
we define the graded $\sa$-module $\cf(\cn)$ given by
$\cf(\cn)(Z)=\cn(\cf(Z))$ for all $Z\in \sa$, where the action is
the following $\phi\cdot n\negmedspace:=\cf(\phi)\cdot n$ for $n\in \cf(\cn)(Z)$
and $\phi\in\sa(X,Z)$. The similar construction is made for
graded right modules and graded bimodules.

\begin{proposition}\label{Prop:HH-HC-equiv}Let $\sa$ and
$\sb$ be superadditive categories and let $\cn$ be a graded
$\sb$-bimodule. If $\cf\negmedspace: \sa\rr\sb$ is an even
equivalence, then there are isomorphisms
of superspaces \begin{align*}\mathsf{(i)}\quad&\HH_{*}(\sa, \cf(\cn))\cong \HH_*(\sb, \cn)&&\HH^{*}(\sa, \cf(\cn))\cong
\HH^*(\sb, \cn).\\
\mathsf{(ii)}\quad&\HC_{*}(\sa)\cong \HC_*(\sb)&&\HC^{*}(\sa)\cong \HC^*(\sb).\end{align*}
\end{proposition}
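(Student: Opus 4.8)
The plan is to lift the equivalence $\cf$ to the Hochschild--Mitchell complexes, show that the induced maps are homotopy equivalences (giving (i)), and then deduce the cyclic statements from the Hochschild ones via the Gysin--Connes sequence. First I would fix a graded quasi-inverse $\cg\colon\sb\rr\sa$ of $\cf$ together with graded natural isomorphisms $\cg\cf\cong\mathrm{Id}_{\sa}$ and $\cf\cg\cong\mathrm{Id}_{\sb}$; these exist because $\cf$ is an even equivalence and respects $\cs$ on both sides. Applying $\cf$ entrywise, $\mathfrak{m}\otimes a_n\otimes\cdots\otimes a_1\mapsto\mathfrak{m}\otimes\cf(a_n)\otimes\cdots\otimes\cf(a_1)$, sends the summand of $C_n(\sa,\cf(\cn))=\bigoplus\cn(\cf X_0,\cf X_n)\otimes\sa(X_n,X_{n-1})\otimes\cdots\otimes\sa(X_1,X_0)$ indexed by $(X_n,\dots,X_0)$ to the summand of $C_n(\sb,\cn)$ indexed by $(\cf X_n,\dots,\cf X_0)$, yielding a map $\cf_{\#}$. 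Because $\cf$ preserves composition and the $\bz_2$-grading, and because the bimodule structure of $\cf(\cn)$ is defined through $\cf$, the map $\cf_{\#}$ commutes with each face $d_n^i$ in (\ref{Equ:Standard-complex}), hence with $\partial$; the same computation works on cochains for $\Hom(-,\cn)$. The essential point is that $\cf$ need not be bijective on objects, so $\cf_{\#}$ is \emph{not} a chain isomorphism: its image is only the subcomplex indexed by tuples lying in the essential image of $\cf$.

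To circumvent this I would prove that $\cf_{\#}$ is a homotopy equivalence. The composite $\cg_{\#}\cf_{\#}$ is the chain map induced by the endofunctor $\cg\cf$, and the graded natural isomorphism $\eta\colon\cg\cf\Rightarrow\mathrm{Id}_{\sa}$ produces an explicit prism homotopy on $\sn(\sa)$, with Koszul signs inserted at each occurrence of $\eta$; symmetrically for $\cf_{\#}\cg_{\#}$. This is the super-analogue of the classical fact that naturally isomorphic functors induce chain-homotopic maps on the nerve, and it yields $\HH_*(\sa,\cf(\cn))\cong\HH_*(\sb,\cn)$ and, dually, $\HH^*(\sa,\cf(\cn))\cong\HH^*(\sb,\cn)$, proving (i). Equivalently, one may note that $\cf$ induces an even equivalence $\sa^e\simeq\sb^e$, whence an exact equivalence of the module categories carrying the standard bimodule to the standard bimodule and $\cn$ to $\cf(\cn)$, and invoke the $\Tor$/$\Ext$ descriptions of Remark~\ref{Rem:HH_0} and Section~\ref{Point:HH^0}. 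Specializing $\cn$ to the standard bimodule $\sb$ and using the graded $\sa$-bimodule isomorphism $\cf(\sb)\cong\sa$ (from full faithfulness, $\cf(\sb)(X,Y)=\sb(\cf X,\cf Y)\cong\sa(X,Y)$) gives in particular $\HH_*(\sa)\cong\HH_*(\sb)$ and $\HH^*(\sa)\cong\HH^*(\sb)$.

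For (ii) I would observe that $\cf_{\#}$ also commutes with the cyclic operators $t_n$, $N_n$ and the extra degeneracy $s$, hence with $B$, since all of these are assembled from composition and the Koszul signs that $\cf$ preserves. Thus $\cf_{\#}$ is a morphism of cyclic bicomplexes $CC_{**}(\sa)\rr CC_{**}(\sb)$ and, in particular, a morphism of the short exact sequences $0\rr\sn(\sa)\rr C(\sa)\rr C(\sa)[2]\rr 0$. Consequently it induces a morphism from the Gysin--Connes sequence (\ref{Equ:Gysin-Connes-Exact-Sequ}) of $\sa$ to that of $\sb$, which on every Hochschild term is an isomorphism by the case just proved. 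An induction on the homological degree together with the five lemma then forces $\HC_*(\sa)\cong\HC_*(\sb)$, the base of the induction being $\HC_0=\HH_0$; the cohomological statement $\HC^*(\sa)\cong\HC^*(\sb)$ follows dually from the cochain bicomplex $CC^{**}(\sa)$.

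The main obstacle is the homotopy invariance in the genuinely categorical (non-isomorphism) case: because $\cf$ is only an equivalence, one must replace the naive chain isomorphism by the prism homotopy coming from the natural isomorphisms, and the real work is the $\bz_2$-graded sign bookkeeping that makes this homotopy well defined and shows $\cg_{\#}\cf_{\#}\simeq\mathrm{id}$. It is precisely to avoid having to verify that such a homotopy is \emph{cyclic}, i.e. compatible with $t_n$, $N_n$ and $B$ simultaneously, that I route the cyclic statement through the Gysin--Connes sequence and the five lemma rather than constructing a cyclic homotopy directly.
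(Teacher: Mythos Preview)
Your argument is correct and complete, but it proceeds along a different axis from the paper. For (i) you work directly at the chain level: you build the functorial chain map $\cf_{\#}$ on the Hochschild--Mitchell complexes and then use the natural isomorphisms $\cg\cf\cong\mathrm{Id}_{\sa}$, $\cf\cg\cong\mathrm{Id}_{\sb}$ to produce a prism homotopy showing $\cf_{\#}$ is a homotopy equivalence. The paper instead avoids chain-level homotopies entirely by invoking the universal $\delta$-functor machinery of \cite[\S2.1]{weibel}: since $\cf^{*}\colon\Gr\sb^{e}\to\Gr\sa^{e}$ is an equivalence, both $\cn\mapsto\HH^{*}(\sa,\cf(\cn))$ and $\cn\mapsto\HH^{*}(\sb,\cn)$ are effaceable universal $\delta$-functors, so one only has to match them in degree~$0$; the paper then writes down the explicit map on $\HH^{0}$ (resp.\ $\HH_{0}$) using the descriptions in Section~\ref{Point:HH^0} and Remark~\ref{Rem:HH_0} and checks it is an isomorphism by hand, treating separately objects in and out of the essential image of $\cf$. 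Your approach has the advantage of yielding an explicit chain-level map compatible with the cyclic structure, which makes the passage to (ii) transparent; the paper's approach trades that for a softer argument that sidesteps all the Koszul-sign bookkeeping you rightly flag as the delicate point. For (ii) the two arguments converge: both feed the Hochschild isomorphism into the Gysin--Connes sequence (\ref{Equ:Gysin-Connes-Exact-Sequ}), and your explicit use of the five lemma with induction on degree is exactly what underlies the paper's one-line appeal to that sequence.
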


\begin{proof}The functor $\cf^*: \Gr\sb^e\rr\Gr\sa^e$ induced by $\cf$ is an even equivalence since $\cf$ is itself an even equivalence. Hence the induced functor $\HH^*(\sa,-)$ is a coeffaceable universal $\delta$-functor. Thus the following collection of functors \begin{align*}
\HH^n(\sa, \cf): \Gr\sb^e\rr\Gr K, \quad \cn\mapsto\cf(\cn)\mapsto\HH^n(\sa,\cf(\cn))
\end{align*}
is a universal $\delta$-functor and the collection of functors \begin{align*}\HH^n({\sb},-): \Gr\sb^e\rr\Gr K, \quad \cn\mapsto \HH^n(\sb,\cn)
\end{align*} is a universal $\delta$-functor too.

In order to prove that two universal $\delta$-functors are isomorphic, it is enough to prove that $\HH^0(\sb,\cn)\cong \HH^0(\sa,\cf(\cn))$ as superspaces (see \cite[Section~2.1]{weibel}). If $Y$ is in the image of $\cf$, then we define the natural morphism $$\imath:\quad \HH^0(\sb,\cn)\rr \HH^0(\sa,\cf(\cn)), \quad \cn(Y,Y)_{Y\in\sb}\mapsto\cn(\cf(X),\cf(X))_{X\in\sa}.$$
Since $\cn(Y,Y)_{Y\in\sb}\in\HH^0(\sb,\cn)$, Section~\ref{Point:HH^0} implies that  $n_{Y}\phi=(-1)^{|\phi||n_Y|}\phi n_{Y'}$ for all $\phi\in\sb(Y,Y')$. It follows that
$n_{\cf(X)}\psi= \psi n_{\cf(X')}$ for $\psi\in\sa(X',X)$ and $\imath$ is an isomorphism.

If $Y$ is not in the image of $\cf$, since it is an even equivalence, there exists $X\in\sa$ such that there is a homogeneous isomorphism $h: Y\cong\cf(X)$ with $h\in\sb(\cf(X),Y)$. Thus we get $n_{Y}= hn_{\cf(X)}h^{-1}=h\imath(n_{X})h^{-1}$. Thus $\imath$ is an isomorphism by the above arguments.

The homological case is analogous to cohomological one. If $Y$ is in the image of $\cf$, it follows by applying Remark~\ref{Rem:HH_0} and the following natural well-defined isomorphism
$$\jmath:\quad \HH_0(\sb, \cn)\rr \HH_0(\sa, \cf(\cn)),\quad \overline{\sum_{Y\in\sb}n_{Y}}\mapsto\overline{\sum_{X\in\sb}n_{\cf(X)}}.$$
 If $Y$ is not in the image of $\cf$, since $\cf$ is an equivalence, there exists $X\in\sa$ such that $h: Y\cong \cf(X)$ with homogeneous isomorphism $h\in\sb(\cf(X),Y)$. Using Remark~\ref{Rem:HH_0}, we get $n_{Y}\!=\!h\overline{n_{\cf(X)}}h^{-1}\!=\!h\jmath(n_{X})h^{-\!1}$. As a consequence, the map $\jmath$ is an isomorphism.

 The assertion (ii) follows by applying the Gysin-Connes exact sequence (\ref{Equ:Gysin-Connes-Exact-Sequ}) and (i).
\end{proof}

\begin{proposition}\label{Prop:HH-HC-hat}Let $\sa$ be a superadditive category and $\cm$ a graded $\widehat{\sa}^e$-module. Then \begin{align*}&\HH_*(\sa, \mathrm{hat}\cm)\cong \HH_*(\widehat{\sa},\cm)\quad \text{ and }\quad\HC_*(\sa )\cong \HC_*(\widehat{\sa}),\\ &\HH^*(\sa, \mathrm{hat}\cm)\cong \HH^*(\widehat{\sa},\cm)\quad \text{ and }\quad\HC^*(\sa)\cong \HC^*(\widehat{\sa}).\end{align*}
 \end{proposition}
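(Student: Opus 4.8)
The plan is to prove Proposition~\ref{Prop:HH-HC-hat} by reducing it to the already-established invariance results for even equivalences (Proposition~\ref{Prop:HH-HC-equiv}) together with the equivalence of categories furnished by Proposition~\ref{Prop:hat-equiv}. The key observation is that $\widehat{\sa}$ is \emph{idempotent complete} by construction, so Proposition~\ref{Prop:hat-equiv} (applied with $\sb=\widehat{\sa}$) yields an equivalence $\mathrm{hat}:\Gr\widehat{\sa}^e\stackrel{\sim}\longrightarrow\Gr\sa^e$. Since $\mathrm{hat}_{\sa}$ is a fully faithful graded additive functor commuting with the suspension (hence with the parity change $\pi$), this equivalence is in fact an \emph{even} equivalence in the sense defined before Proposition~\ref{Prop:HH-HC-equiv}. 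Thus $\sa$ and $\widehat{\sa}$ are graded Morita equivalent.

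First I would set up the functor $\mathrm{hat}_\sa:\sa\rr\widehat{\sa}$ recalled in the excerpt and verify that the induced pullback on bimodules is exactly the operation $\cm\mapsto\mathrm{hat}\,\cm$, i.e.\ that $(\mathrm{hat}\,\cm)(X,Y)=\cm\bigl((X,1_X),(Y,1_Y)\bigr)$ with the actions pulled back along $\mathrm{hat}_\sa$ as in the paragraph preceding Proposition~\ref{Prop:HH-HC-equiv}. Next I would invoke Proposition~\ref{Prop:HH-HC-equiv}(i) with the even equivalence $\cf=\mathrm{hat}_\sa$ and the graded $\widehat{\sa}$-bimodule $\cm$; this directly gives the Hochschild isomorphisms
\begin{align*}
\HH_*(\sa,\mathrm{hat}\,\cm)\cong\HH_*(\widehat{\sa},\cm),\qquad
\HH^*(\sa,\mathrm{hat}\,\cm)\cong\HH^*(\widehat{\sa},\cm).
\end{align*}
The cyclic statements then follow immediately from Proposition~\ref{Prop:HH-HC-equiv}(ii) applied to the same even equivalence, since that part asserts $\HC_*(\sa)\cong\HC_*(\widehat{\sa})$ and $\HC^*(\sa)\cong\HC^*(\widehat{\sa})$ whenever $\sa$ and $\widehat{\sa}$ are related by an even equivalence.

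The only genuine content, therefore, is confirming that $\mathrm{hat}_\sa$ does induce an \emph{even equivalence} on module categories to which Proposition~\ref{Prop:HH-HC-equiv} legitimately applies, rather than merely a fully faithful embedding. The hypothesis of Proposition~\ref{Prop:HH-HC-equiv} is an even equivalence $\cf:\sa\rr\sb$, and although $\mathrm{hat}_\sa$ is fully faithful but not essentially surjective as a functor $\sa\rr\widehat{\sa}$, what we actually need is that the \emph{induced} functor on graded bimodule categories is an equivalence commuting with $\pi$. This is precisely the content of Proposition~\ref{Prop:hat-equiv}: the functor $\mathrm{hat}:\Gr\widehat{\sa}^e\stackrel{\sim}\longrightarrow\Gr\sa^e$ is an equivalence, and one checks it is even because $\mathrm{hat}_\sa$ commutes with the suspension $\cs$ and hence the charge parity $\pi$ is preserved.

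The main obstacle I anticipate is the mild mismatch between the literal hypothesis of Proposition~\ref{Prop:HH-HC-equiv} (an even equivalence of superadditive categories $\sa\rr\sb$) and the situation here (where $\mathrm{hat}_\sa$ is only an embedding, but the induced map on graded module categories is an equivalence). To bridge this I would either (a) note that Proposition~\ref{Prop:HH-HC-equiv} is really proved at the level of the $\delta$-functors $\HH^\ast(\sa,-)$ and $\HH^\ast(\widehat{\sa},-)$, whose agreement in degree $0$ is all that is needed by the universal $\delta$-functor argument of \cite[Section~2.1]{weibel}, and the degree-$0$ comparison $\HH^0(\widehat{\sa},\cm)\cong\HH^0(\sa,\mathrm{hat}\,\cm)$ follows directly from Section~\ref{Point:HH^0} together with the idempotent-splitting description of morphisms in $\widehat{\sa}$; or (b) combine Proposition~\ref{Prop:hat-equiv} with the earlier observation that graded Morita equivalent superadditive categories with finite products are genuinely equivalent, reducing to a literal application of Proposition~\ref{Prop:HH-HC-equiv}. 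Route (a) is cleaner, and once the degree-$0$ isomorphisms $\HH^0$ and $\HH_0$ are in hand the cyclic cases drop out via the Gysin--Connes exact sequence (\ref{Equ:Gysin-Connes-Exact-Sequ}) exactly as in the proof of Proposition~\ref{Prop:HH-HC-equiv}.
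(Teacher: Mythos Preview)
Your proposal is correct, and your route (a) is exactly the paper's approach: the paper sets up the two collections of functors $\cm\mapsto\HH^*(\sa,\mathrm{hat}\,\cm)$ and $\cm\mapsto\HH^*(\widehat{\sa},\cm)$ as universal graded $\delta$-functors, writes down the explicit natural maps $\eta$ and $\theta$ in degree~$0$, and then invokes Proposition~\ref{Prop:hat-equiv} to conclude they are isomorphisms. You were right to flag that Proposition~\ref{Prop:HH-HC-equiv} does not apply literally, since $\mathrm{hat}_\sa$ is not an equivalence of superadditive categories; the paper does not attempt to route through that proposition but instead repeats its $\delta$-functor argument directly in this setting, which is precisely your route (a).
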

\begin{proof}Let us define the following collections of functors:
\begin{align*}&\mathrm{hat}_1: \Gr\widehat{\sa}^e\rr \Gr K, \quad \cm\mapsto\mathrm{hat}\cm\mapsto \HH^*(\sa, \mathrm{hat}\cm)\\
&\mathrm{hat}_2: \Gr\widehat{\sa}^e\rr \Gr K, \quad \cm\mapsto \HH^*(\widehat{\sa}, \cm).\end{align*}
Notice that both are universal graded $\delta$-functors (coeffaceable in the homological case and effaceable in the cohomological one).

Now we define the following natural morphisms
\begin{align*}&\eta:  \HH^0(\widehat{\sa}, \cm)\rr \HH^0(\sa,\mathrm{hat}\cm),\qquad
(m_{(X, \epsilon)})_{(X, \epsilon)\in\widehat{\sa}}\mapsto(m_{X})_{X\in\sa};\\
 &\theta:  \HH_0(\widehat{\sa}, \cm)\rr \HH_0(\sa,\mathrm{hat}\cm),\qquad
\overline{\sum_{(X,\epsilon)\in\widehat{\sa}}m_{(X, \epsilon)}}\mapsto
\overline{\sum_{X\in\sa}m_{X}}.
\end{align*}
 It is immediate to see that they are both well-defined and surjective. Moreover, by applying Proposition~\ref{Prop:hat-equiv}, $\eta$ and $\theta$ are isomorphisms.
\end{proof}

\begin{proposition}\label{Prop:HH-HC-mat}Let $\sa$ be a superadditive category and $\cm$ a graded $\widehat{\sa}^e$-module. Then
\begin{align*}&\HH_*(\sa, \mathrm{hat}\cm)\cong \HH_*(\mathrm{mat}\sa,\cm)\quad \text{ and }\quad\HC_*(\sa )\cong \HC_*(\mathrm{mat}\sa),\\ &\HH^*(\sa, \mathrm{mat}\cm)\cong \HH^*(\mathrm{mat}\sa,\cm)\quad \text{ and }\quad\HC^*(\sa)\cong \HC^*(\mathrm{mat}\sa).\end{align*}
\end{proposition}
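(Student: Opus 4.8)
The plan is to mirror, essentially verbatim, the structure of the proof of Proposition~\ref{Prop:HH-HC-hat}, since the statement for $\mathrm{Mat}\sa$ is the exact analogue of the statement for $\widehat{\sa}$, with Section~\ref{Point:Matrix-equ.} playing the role that Proposition~\ref{Prop:hat-equiv} played there. First I would set up the two collections of functors
\begin{align*}
&\mathrm{mat}_1:\ \Gr(\mathrm{Mat}\sa)^e\rr\Gr K,\quad \cm\mapsto\mathrm{mat}\cm\mapsto\HH^*(\sa,\mathrm{mat}\cm),\\
&\mathrm{mat}_2:\ \Gr(\mathrm{Mat}\sa)^e\rr\Gr K,\quad \cm\mapsto\HH^*(\mathrm{Mat}\sa,\cm),
\end{align*}
and observe, just as before, that both are universal graded $\delta$-functors (effaceable in the cohomological case, coeffaceable in the homological case), because Hochschild (co)homology is realized as an $\Ext$ (resp.~$\Tor$) functor over the enveloping category by Section~\ref{Point:HH^0} and Remark~\ref{Rem:HH_0}. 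By the uniqueness of universal $\delta$-functors (see \cite[Section~2.1]{weibel}), it then suffices to exhibit an isomorphism in degree $0$.

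Next I would write down the comparison maps in degree $0$ explicitly, imitating $\eta$ and $\theta$:
\begin{align*}
&\eta:\ \HH^0(\mathrm{Mat}\sa,\cm)\rr\HH^0(\sa,\mathrm{mat}\cm),\qquad (m_{\underline{X}})_{\underline{X}\in\mathrm{Mat}\sa}\mapsto(m_{(X)})_{X\in\sa};\\
&\theta:\ \HH_0(\mathrm{Mat}\sa,\cm)\rr\HH_0(\sa,\mathrm{mat}\cm),\qquad \overline{\sum_{\underline{X}\in\mathrm{Mat}\sa}m_{\underline{X}}}\mapsto\overline{\sum_{X\in\sa}m_{(X)}},
\end{align*}
where $\mathrm{mat}:\sa\rr\mathrm{Mat}\sa$ is the embedding $X\mapsto(X)$ from Section~\ref{Point:Matrix-equ.}. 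Here one uses the explicit description of $\HH^0$ as graded-central tuples (Section~\ref{Point:HH^0}) and of $\HH_0$ as the quotient by graded commutators (Remark~\ref{Rem:HH_0}); well-definedness of $\eta$ follows because the graded-centrality condition $m_{\underline X}\phi=(-1)^{|m_{\underline X}||\phi|}\phi\,m_{\underline Y}$ restricts along $\mathrm{mat}$, and well-definedness of $\theta$ because $\mathrm{mat}$ carries graded commutators to graded commutators.

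The key point, and the one genuine obstacle, is surjectivity and injectivity of $\eta$ and $\theta$. This is exactly where the equivalence $\mathrm{mat}:\Gr(\mathrm{Mat}\sa)^e\stackrel{\sim}\rr\Gr\sa^e$ from Section~\ref{Point:Matrix-equ.} enters: because every object $\underline{X}=(X_1,\dots,X_m)$ of $\mathrm{Mat}\sa$ is a finite biproduct of objects in the image of $\mathrm{mat}$, a graded-central family on $\sa$ extends uniquely to one on $\mathrm{Mat}\sa$ (the matrix entries being forced by compatibility with the biproduct projections/inclusions), giving inversion of $\eta$; dually, every class in $\HH_0(\mathrm{Mat}\sa,\cm)$ is represented, modulo graded commutators, by a sum of diagonal contributions supported on the image of $\mathrm{mat}$, giving inversion of $\theta$. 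Concretely this should be phrased as ``$\mathrm{mat}$ induces an equivalence $\Gr(\mathrm{Mat}\sa)^e\stackrel{\sim}\rr\Gr\sa^e$, hence $\eta$ and $\theta$ are isomorphisms'', exactly parallel to the invocation of Proposition~\ref{Prop:hat-equiv} in the previous proof. Finally the cyclic homology and cohomology statements $\HC_*(\sa)\cong\HC_*(\mathrm{Mat}\sa)$ and $\HC^*(\sa)\cong\HC^*(\mathrm{Mat}\sa)$ follow formally from the Hochschild isomorphisms by feeding them into the Gysin--Connes exact sequence (\ref{Equ:Gysin-Connes-Exact-Sequ}) and applying the five lemma, just as in Proposition~\ref{Prop:HH-HC-equiv}(ii).
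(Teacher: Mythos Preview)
Your proposal is correct and follows essentially the same argument as the paper: define the two $\delta$-functors $\mathrm{mat}_1$, $\mathrm{mat}_2$, reduce to degree~$0$ by universality, write down the restriction maps on $\HH^0$ and $\HH_0$, and conclude they are isomorphisms by invoking the equivalence of Section~\ref{Point:Matrix-equ.}. Your version is in fact slightly more explicit than the paper's (which names the maps $\mu,\nu$ and stops at ``well-defined, surjective, hence isomorphisms by Section~\ref{Point:Matrix-equ.}''), and you spell out the passage to cyclic (co)homology via the Gysin--Connes sequence, which the paper leaves implicit.
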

\begin{proof}Define the following collections of functors:
\begin{align*}&\mathrm{mat}_1: \Gr\mathrm{Mat}(\sa)^e\rr \Gr K, \quad \cn\mapsto\mathrm{mat}(\cm)\mapsto \HH^*(\sa, \mathrm{mat}\cm)\\
&\mathrm{mat}_2: \Gr\mathrm{Mat}\sa^e\rr \Gr K, \quad \cn\mapsto \HH^*(\mathrm{Mat}\sa, \cm).\end{align*}
Notice that both are universal graded $\delta$-functors (coeffaceable in the homological case and effaceable in the cohomological one).

Now we define the following natural morphisms
\begin{align*}&\mu:  \HH^0(\mathrm{Mat}\sa, \cm)\rr \HH^0(\sa,\mathrm{mat}\cm),\qquad
(m_{X_1, \cdots, X_n})_{(X_1, \cdots, X_n)\in\mathrm{Mat}\sa}\mapsto(m_{X})_{X\in\sa};\\
 &\nu:  \HH_0(\mathrm{Mat}\sa, \cm)\rr \HH_0(\sa,\mathrm{mat}\cm),\qquad
\overline{\sum_{(X_1, \cdots, X_n)\in\mathrm{Mat}\sa}(m_{X_1, \cdots, X_n})}\mapsto\overline{\sum_{X\in\sa}m_{X}}.
\end{align*}
 It is immediate to see that they are both well-defined and surjective. Moreover, by applying Section~\ref{Point:Matrix-equ.}, $\mu$ and $\nu$ are isomorphisms.
\end{proof}

The following result extends \cite[Theorem~2.12]{Her-Solotar} and \cite[Theorem~3.6]{Zhao}.

\begin{theorem}\label{Them:HH-HC-Morita}The Hochschild and cyclic (co)homology of superadditive categories are graded Morita equivalent invariants.\end{theorem}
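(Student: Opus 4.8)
The plan is to assemble this theorem from the structural results already established, so that essentially no new computation is required. By Theorem~\ref{Them:Morita-addi-idem}, any graded Morita equivalence between superadditive categories factors as a finite composition of three elementary types of operation: an (even) equivalence of superadditive categories, the additive completion $\sa\mapsto\mathrm{Mat}\,\sa$, and the idempotent completion $\sa\mapsto\widehat{\sa}$. Since being invariant under each elementary move is clearly preserved under composition, it suffices to check that each of the Hochschild homology, Hochschild cohomology, cyclic homology, and cyclic cohomology is unchanged (up to canonical isomorphism of superspaces) under each of these three moves.

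First I would dispatch the equivalence case. This is precisely the content of Proposition~\ref{Prop:HH-HC-equiv}: an even equivalence $\cf\negmedspace:\sa\rr\sb$ induces isomorphisms $\HH_*(\sa,\cf(\cn))\cong\HH_*(\sb,\cn)$, $\HH^*(\sa,\cf(\cn))\cong\HH^*(\sb,\cn)$, and $\HC_*(\sa)\cong\HC_*(\sb)$, $\HC^*(\sa)\cong\HC^*(\sb)$. Next, the idempotent completion case is Proposition~\ref{Prop:HH-HC-hat}, which gives $\HH_*(\sa,\mathrm{hat}\,\cm)\cong\HH_*(\widehat{\sa},\cm)$ together with the analogous statements for $\HH^*$, $\HC_*$, and $\HC^*$. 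Finally, the additive completion case is Proposition~\ref{Prop:HH-HC-mat}, supplying $\HH_*(\sa,\mathrm{mat}\,\cm)\cong\HH_*(\mathrm{Mat}\,\sa,\cm)$ and the corresponding cohomological and cyclic statements. Thus all three elementary moves preserve the four (co)homology theories, and composing these isomorphisms along the factorization provided by Theorem~\ref{Them:Morita-addi-idem} yields the invariance in general.

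The one point that requires a little care, rather than being purely formal, is the bookkeeping of \emph{coefficients}: the Hochschild theories $\HH_*$ and $\HH^*$ depend on a bimodule, so under each move one must transport the coefficient bimodule along the comparison functor (via $\cf(-)$, $\mathrm{hat}$, or $\mathrm{mat}$) and verify that the isomorphisms of the three propositions are compatible when the elementary moves are chained together. Concretely, I would observe that each elementary functor induces an equivalence on the corresponding enveloping categories (as recorded in Proposition~\ref{Prop:hat-equiv} and Section~\ref{Point:Matrix-equ.}), so a graded bimodule over the larger category restricts canonically to one over the smaller, and the comparison isomorphisms are natural in the bimodule; naturality is exactly what lets the isomorphisms be spliced across a composition without ambiguity. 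For the cyclic theories there is no coefficient module, so this compatibility is automatic once the Hochschild invariance is in hand, because $\HC_*$ and $\HC^*$ are obtained from Hochschild (co)homology through the Gysin--Connes exact sequence~(\ref{Equ:Gysin-Connes-Exact-Sequ}) and the five lemma.

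I expect the main (and only genuine) obstacle to be conceptual rather than technical: ensuring that Theorem~\ref{Them:Morita-addi-idem} is applied correctly, namely that an arbitrary graded Morita equivalence really does decompose into the three listed elementary moves so that the three propositions cover \emph{every} case. Once that decomposition is invoked, the proof is a short assembly. I would therefore write it as follows: by Theorem~\ref{Them:Morita-addi-idem} reduce to the three elementary moves; cite Proposition~\ref{Prop:HH-HC-equiv} for equivalences, Proposition~\ref{Prop:HH-HC-hat} for idempotent completions, and Proposition~\ref{Prop:HH-HC-mat} for additive completions; and conclude by composing the resulting isomorphisms, using the Gysin--Connes sequence to pass from the Hochschild to the cyclic statements wherever a direct argument was not already given.
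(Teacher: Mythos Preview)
Your proposal is correct and follows exactly the paper's own strategy: reduce via Theorem~\ref{Them:Morita-addi-idem} to the three elementary moves and then invoke Propositions~\ref{Prop:HH-HC-equiv}, \ref{Prop:HH-HC-hat}, and \ref{Prop:HH-HC-mat}. The extra care you take with coefficient bookkeeping and the Gysin--Connes sequence is sound but more than the paper records; the published proof is simply the two-sentence assembly you describe.
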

\begin{proof}Thanks to Theorem~\ref{Them:Morita-addi-idem}, it is enough to prove that the Hochschild and cyclic (co)homology are invariant under equivalences, idempotent and additive completion.  By applying Propositions~\ref{Prop:HH-HC-equiv}, \ref{Prop:HH-HC-hat} and \ref{Prop:HH-HC-mat}, we complete the proof.\end{proof}

\section{Shuffles and cyclic shuffles}\label{Sec:Shuflle-Cyclic-Shuffle}
In this section  we first introduce the shuffles of superadditive categories and show that Hochschild homology commute with tensor product. Then we introduce the cyclic shuffles of superadditive categories and determine the K\"{u}nneth exact sequence of  cyclic homology. Throughout this section $\sa$ and $\sb$ are superadditive categories.

\begin{point}{}*Let $\mathfrak{S}_n$ be the symmetric group on $\{1, \cdots, n\}$. A $(p,q)$-\textit{shuffle} is a permutation $\gs\in\mathfrak{S}_{p+q}$ such that \begin{align*}
  \gs(1)<\gs(2)<\cdots<\gs(p)\quad\text{ and }\quad \gs(p+1)<\gs(p+2)<\cdots<\gs(p+q).
\end{align*}
For any superadditive category $\sa$ we let $\fs$ act on the left on $\sn_{n+1}=\sn_{n+1}(\sa)$ by \begin{align*}
  \gs(a_0,a_1,\cdots, a_n)=(a_0, a_{\gs^{-1}(1)}, \cdots, a_{\gs^{-1}(n)}).
\end{align*}
Thus if $\gs$ is a $(p,q)$-shuffle the elements $\{a_1, a_2, \cdots, a_p\}$ appear in the same order in the sequence $\gs\cdot(a_0,a_1,\cdots, a_n)$ as do the element $\{a_{p+1}, a_{p+2}, \cdots, a_{p+q}\}$.

 The {\it shuffle product}
\begin{align*}
  \mathrm{sh}_{pq}: \sn_{p}(\sa)\otimes\sn_q(\sb)\rr\sn_{p+q}(\sa\otimes\sb)
\end{align*}
is defined by the following formula:
\begin{align*}
  \mathrm{sh}_{pq}((a_0, a_1, \cdots, a_p)\otimes(b_0, b_1, \cdots, b_q))\!=\!\sum_{\gs}\mathrm{sgn}(\gs)\gs(a_0\!\otimes\! b_0, a_1\!\otimes\! 1, \cdots, a_p\!\otimes\!1; 1\!\otimes\! b_1, \cdots, 1\!\otimes\! b_q),
\end{align*}
where the sum is extended over all $(p,q)$-shuffles.
Note that the same formula defines more generally a shuffle product from $\sn_p(\sa,\cm)\otimes\sn_q(\sb,\cn)$ to $\sn_{p+q}(\sa\otimes\sb, \cm\otimes\cn)$.
\end{point}

\begin{proposition}\label{Prop:HH-shuffle}For $a\in\sn_p(\sa)$ and $b\in\sn_q(\sb)$, we have
  \begin{align*}\partial_{p+q}\circ\mathrm{sh}_{pq}(a\otimes b)=\sh_{p-1q}(\partial_p(a)\otimes b)+(-1)^{|q|}\sh_{pq-1}(a\otimes \partial_q(b)).\end{align*}
\end{proposition}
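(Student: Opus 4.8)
The plan is to verify the Leibniz-type identity $\partial_{p+q}\circ\sh_{pq}=\sh_{p-1q}\circ(\partial_p\otimes 1)+(-1)^{|q|}\sh_{pq-1}\circ(1\otimes\partial_q)$ by a direct combinatorial expansion of both sides on a generating tensor $(a_0,\dots,a_p)\otimes(b_0,\dots,b_q)$, matching the face maps $d^i$ term by term. The essential point is that the shuffle product interleaves the two sequences while keeping the internal orders fixed, and that $\partial$ is an alternating sum of face maps that multiply adjacent entries. First I would recall that $\sh_{pq}$ is a signed sum over $(p,q)$-shuffles $\gs\in\fs_{p+q}$ of the terms $\gs(a_0\otimes b_0, a_1\otimes 1,\dots,a_p\otimes 1; 1\otimes b_1,\dots,1\otimes b_q)$, and then apply $\partial_{p+q}=\sum_{i=0}^{p+q}(-1)^i d^i_{p+q}$. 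When a face map $d^i$ multiplies two adjacent slots of a shuffled sequence, either both slots come from the $a$-strand, both come from the $b$-strand, or they come from different strands.

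The key organizing observation is the classical argument of Loday--Quillen (adapted to the super setting): the terms in which $d^i$ multiplies two entries lying in \emph{different} strands cancel in pairs. Precisely, for each such ``mixed'' contraction there is a companion shuffle obtained by transposing the two offending indices, and its sign (combining $\mathrm{sgn}(\gs)$ with the $(-1)^i$ from the face map) is opposite, so the two mixed terms kill each other; the Koszul signs coming from moving $a_i\otimes 1$ past $1\otimes b_j$ in the graded tensor product are exactly what make this cancellation work, since $|a_i\otimes 1|=|a_i|$ and $|1\otimes b_j|=|b_j|$ and the composition rule in $\sa\otimes\sb$ carries the sign $(-1)^{|\phi_2||\psi_1|}$. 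After these mixed terms vanish, the surviving terms are those where $d^i$ contracts within the $a$-strand or within the $b$-strand, and one checks that the $a$-strand contractions reassemble into $\sh_{p-1q}(\partial_p(a)\otimes b)$ while the $b$-strand contractions reassemble into $\sh_{pq-1}(a\otimes\partial_q(b))$, the latter acquiring the global sign $(-1)^{|q|}$ because the whole $b$-differential must be commuted past the degree-$p$ factor $a$. The cyclic face $d^{p+q}$ (the $i=p+q$ term, which wraps $a_{n}$ around to the front) must be handled together with the $i=0$ term, and here one uses the explicit super sign $(-1)^{|a_n||a_0a_1\cdots a_{n-1}|}$ built into the definition of $d^n_n$ in \eqref{Equ:Standard-complex}.

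The bookkeeping of signs is where I expect the genuine difficulty to lie. The ordinary (ungraded) statement is Loday's, but in the superadditive setting every reordering of tensor factors and every cyclic face introduces a Koszul sign depending on the parities $|a_i|,|b_j|$, and one must confirm that the sign $\mathrm{sgn}(\gs)$ attached to a shuffle combines correctly with the parities so that the pairwise cancellation of mixed terms is sign-exact and the global factor on the second summand comes out as precisely $(-1)^{|q|}$ with $|q|=\sum_{j}|b_j|$. My strategy for controlling this is to work with a single fixed shuffle $\gs$ and track, for each index $i$, exactly which pair of adjacent entries $d^i$ multiplies and what Koszul sign the graded tensor product contributes; then I would exhibit the sign-reversing involution on the set of (shuffle, mixed-contraction) pairs explicitly, rather than by appeal to the ungraded case. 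The remaining non-mixed terms can then be collected by a change-of-summation-variable argument identifying $(p,q)$-shuffles of $\fs_{p+q}$ that contract inside the $a$-block with $(p-1,q)$-shuffles of $\fs_{p+q-1}$, and symmetrically for the $b$-block, which yields the two stated summands and completes the verification.
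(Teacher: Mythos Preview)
Your proposal follows essentially the same route as the paper's proof: expand $\partial_{p+q}\sh_{pq}(a\otimes b)$ over shuffles and face maps, cancel in pairs the ``mixed'' terms where $d^i$ contracts adjacent entries from different strands (via the involution that swaps those two adjacent entries in the shuffle, flipping $\mathrm{sgn}(\gs)$), and identify the surviving same-strand contractions with $\sh_{p-1\,q}(\partial_p a\otimes b)$ and $\sh_{p\,q-1}(a\otimes\partial_q b)$. Your treatment is in fact more explicit than the paper's about the Koszul signs and about the wrap-around face $d^{p+q}$; one small caution is your reading of $(-1)^{|q|}$ as the $\bz_2$-parity $\sum_j|b_j|$, whereas in the paper's convention for the tensor product of complexes this sign is the \emph{homological} degree of $a$ (compare the corollary and the formula $d(x\otimes y)=\partial x\otimes y+(-1)^{|x|}x\otimes\partial y$ immediately following).
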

\begin{proof}Let $a=(a_0,a_1, \cdots, a_p)$ and $b=(b_0,b_1, \cdots, b_q)$ and write $$\sh_{pq}(a\otimes b)=\sum\pm(c_0=a_0\otimes b_0, c_1, \cdots, c_{p+q}),$$ where $c_i$ is either in $I_1=\{a_1\otimes 1, \cdots, a_p\otimes 1\}$ or in $I_2=\{1\otimes b_1, \cdots, 1\otimes b_q\}$. Fix $i$ ($0\leq i\leq n=p+q$) and consider the element $d_n^i(c_0,c_1, \cdots, c_n)$ appearing in the expansion of $\partial_n(\sh_{pq}(a,b))$. If $c_i$ and $c_{i+1}$ are in $I_1$ (resp. $I_2$), or if $i=0$ and $c_1$ is in $I_1$ (resp. $I_2$), then $d_n^i(c_0,c_1, \cdots, c_n)$ appears also in the expansion of $\sh_{p-1q}(\partial_{p+q}(a),b)$ (resp. $\sh_{pq-1}(a,\partial_q(b))$ and conversely. If $c_i$ and $c_{i+1}$ belong to two different sets, then $(c_0, c_1, \cdots, c_{i-1}, c_{i+1},c_i, c_{i+2},\cdots, c_{p+q})$ is also a shuffle and appears in the expansion of $\sh_{pq}(a\otimes b)$. As its sin is the opposite of the sign (in front) of $(c_0, c_1, \cdots, c_{p+q})$, these two elements cancel after applying $d_n^i$ (because $c_ic_{i+1}=c_{i+1}c_i$).
Thus we have proved the proposition.\end{proof}

Recall that the tensor product of complexes $(C_{\bullet},\partial)$ and $(\widetilde{C}_{\bullet},\partial)$ is the complex $(C\otimes \widetilde{C})_{\bullet}$ with $(C\otimes \widetilde{C})_{n}=\oplus_{p+q=n}C_p\otimes \widetilde{C}_q$ and the differential map is defined by the formula $$d(x\otimes y)=(\partial\otimes 1+1\otimes \partial)(x\otimes y)=\partial(x)\otimes y+(-1)^{|x|}x\otimes\partial(y).$$
Let \begin{align*}
  \sh: (\sn_*(\sa)\otimes\sn_*(\sb))_n=\bigoplus_{p+q=n}\sn_p(\sa)\otimes\sn_q(\sb)\rr\sn_{n}(\sa\otimes\sb)
\end{align*}
be the sum of the shuffle product maps $\sh_{pq}$ for $p+q=n$.

The following fact follows readily form Proposition~\ref{Prop:HH-shuffle}.
\begin{corollary}\label{Cor:partial-shuffle}
 The map $\sh: \sn_*(\sa)\otimes\sn_*(\sb)\rr\sn_*(\sa\otimes\sb)$ is a map of complexes of degree of zero, that is, $[\partial,\sh]:=\partial\circ\sh-\sh\circ(\partial\otimes 1+1\otimes \partial)=0$.
\end{corollary}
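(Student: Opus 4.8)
The plan is to reduce the statement to Proposition~\ref{Prop:HH-shuffle} by exploiting the bidegree decomposition of the source complex. Since $(\sn_*(\sa)\otimes\sn_*(\sb))_n=\bigoplus_{p+q=n}\sn_p(\sa)\otimes\sn_q(\sb)$ and $\sh$ is by definition $\sum_{p+q=n}\sh_{pq}$, the map $\sh$ restricts on each summand $\sn_p(\sa)\otimes\sn_q(\sb)$ to the single map $\sh_{pq}$, whose image lies in $\sn_{p+q}(\sa\otimes\sb)$. Because $[\partial,\sh]$ is $K$-linear and every element of $\sn_*(\sa)\otimes\sn_*(\sb)$ is a finite sum of elements of the form $a\otimes b$ with $a\in\sn_p(\sa)$ and $b\in\sn_q(\sb)$, it suffices to verify $[\partial,\sh](a\otimes b)=0$ on such a homogeneous tensor.

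Fix such an $a\otimes b$ of total degree $n=p+q$. First I would compute $\partial\circ\sh(a\otimes b)=\partial_{p+q}\circ\sh_{pq}(a\otimes b)$, using that $\partial$ acts as the Hochschild--Mitchell differential $\partial_{p+q}$ on $\sn_{p+q}(\sa\otimes\sb)$. Next I would expand the other term: by the definition of the tensor-product differential, $(\partial\otimes 1+1\otimes\partial)(a\otimes b)=\partial_p(a)\otimes b+(-1)^{|a|}a\otimes\partial_q(b)$, where $\partial_p(a)\otimes b$ lies in the summand $\sn_{p-1}(\sa)\otimes\sn_q(\sb)$ and $a\otimes\partial_q(b)$ in $\sn_p(\sa)\otimes\sn_{q-1}(\sb)$, both of total degree $n-1$. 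On these two summands $\sh$ restricts to $\sh_{p-1q}$ and $\sh_{pq-1}$ respectively, so that $\sh\circ(\partial\otimes 1+1\otimes\partial)(a\otimes b)=\sh_{p-1q}(\partial_p(a)\otimes b)+(-1)^{|a|}\sh_{pq-1}(a\otimes\partial_q(b))$. Subtracting the two expressions, $[\partial,\sh](a\otimes b)$ is precisely the difference of the left- and right-hand sides of the identity in Proposition~\ref{Prop:HH-shuffle}, and hence vanishes term by term.

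The only point that genuinely requires attention is the reconciliation of signs: the Koszul sign $(-1)^{|a|}$ produced when the boundary is moved past the first tensor factor in $1\otimes\partial$ must coincide with the sign attached to the second summand in Proposition~\ref{Prop:HH-shuffle}. I expect this to be the main (and essentially only) obstacle, since everything else is the purely formal unwinding of the bidegree decomposition; one should confirm that, on the summand indexed by $(p,q)$, the degree $|a|=p$ appearing in the tensor differential matches the exponent displayed in the Proposition, so that the two single-summand identities are literally identical. Granting this, summing the resulting equalities over all decompositions $p+q=n$ yields $[\partial,\sh]=0$ on $(\sn_*(\sa)\otimes\sn_*(\sb))_n$ for every $n$; and since $\sh$ manifestly sends total degree $n$ to degree $n$, it is a chain map of degree zero, as claimed.
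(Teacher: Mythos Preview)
Your proposal is correct and is exactly the argument the paper intends: the paper's own proof is the single remark that the corollary ``follows readily from Proposition~\ref{Prop:HH-shuffle},'' and you have simply unpacked that sentence. Your caution about the sign is well-placed---the exponent $|q|$ in the displayed formula of Proposition~\ref{Prop:HH-shuffle} is evidently a typo for $p$ (the homological degree of $a$), after which the two signs agree on the nose.
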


The following is a categorical version of the Eilenberg-Zilber Theorem for Hochschild homology of algebras.
\begin{theorem}\label{Them:HH-shuffle}Assume that $K$ is a field. Then the shuffle map $\sh$ induces an isomorphism
\begin{align*}
  \sh_*: \HH_*(\sa)\otimes\HH_*(\sb)\stackrel{\sim}\longrightarrow \HH_*(\sa\otimes\sb).
\end{align*}
\end{theorem}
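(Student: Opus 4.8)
The plan is to deduce the theorem from Corollary~\ref{Cor:partial-shuffle} by invoking the algebraic Eilenberg--Zilber machinery (the Künneth theorem together with a comparison of the shuffle map against the Eilenberg--Zilber equivalence), all carried out in the graded/super setting. Since Corollary~\ref{Cor:partial-shuffle} already tells us that $\sh\colon \sn_*(\sa)\otimes\sn_*(\sb)\rr\sn_*(\sa\otimes\sb)$ is a chain map of degree zero, the content of the theorem is precisely that this chain map is a \emph{quasi-isomorphism}. Taking homology of a chain map is automatic, so the real work is establishing that $\sh$ induces an isomorphism on homology, after which the Künneth formula over a field supplies the tensor decomposition of the left-hand side.

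First I would record the Künneth isomorphism. Because $K$ is a field, every $K$-module is free, so there are no $\Tor$ obstructions and the Eilenberg--Zilber/Künneth theorem gives a natural isomorphism
\begin{align*}
  \HH_*(\sa)\otimes\HH_*(\sb)\;\cong\;\mathrm{H}_*\bigl(\sn_*(\sa)\otimes\sn_*(\sb)\bigr),
\end{align*}
where the right side is the homology of the tensor product complex with the sign-twisted differential $\partial\otimes 1+1\otimes\partial$ recorded just before Corollary~\ref{Cor:partial-shuffle}. Thus it suffices to show that the map on homology
\begin{align*}
  \sh_*\colon \mathrm{H}_*\bigl(\sn_*(\sa)\otimes\sn_*(\sb)\bigr)\lrr \HH_*(\sa\otimes\sb)
\end{align*}
induced by the chain map $\sh$ is an isomorphism.

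To prove that $\sh$ is a quasi-isomorphism I would compare it with the standard Alexander--Whitney/Eilenberg--Zilber setup. The diagonal $\sa\otimes\sb$ has its own Hochschild--Mitchell complex $\sn_*(\sa\otimes\sb)$, and the classical Eilenberg--Zilber theorem produces a shuffle map and an Alexander--Whitney map between the tensor product of the two factor complexes and the complex of the product, which are mutually inverse homotopy equivalences. The key point is that the shuffle map of Section~\ref{Sec:Shuflle-Cyclic-Shuffle} \emph{is} (up to the Koszul signs already built into its definition) exactly the Eilenberg--Zilber shuffle for the simplicial structure underlying $\sn_\bullet$; the face operators $d_n^i$ in \eqref{Equ:Standard-complex} realize $\sn_\bullet(\sa)$ as (the chain complex of) a cyclic, hence simplicial, object, and the normalized chains behave as in the classical case. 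I would therefore construct the Alexander--Whitney map $\mathrm{AW}\colon \sn_*(\sa\otimes\sb)\rr\sn_*(\sa)\otimes\sn_*(\sb)$ in the super setting, check $\sh\circ\mathrm{AW}$ and $\mathrm{AW}\circ\sh$ are chain-homotopic to the identities via the usual explicit homotopies, and conclude that $\sh_*$ is an isomorphism.

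\textbf{The main obstacle} is bookkeeping of the Koszul signs throughout. In the super setting the simplicial identities, the homotopies witnessing the Eilenberg--Zilber equivalence, and the commutativity isomorphism $V\otimes W\cong W\otimes V$ all acquire signs of the form $(-1)^{|x||y|}$, and one must verify that the sign $\mathrm{sgn}(\gs)$ in the definition of $\sh_{pq}$ together with the parity factors combine consistently so that $\sh$ and $\mathrm{AW}$ remain mutually inverse up to homotopy. Proposition~\ref{Prop:HH-shuffle} already isolates the crucial cancellation mechanism (a transposition of adjacent factors from the two different shuffle-blocks flips the sign while $c_ic_{i+1}=c_{i+1}c_i$ by graded commutativity in $\sa\otimes\sb$), so I expect the homotopy computations to follow the same pattern; the labor is in confirming that no residual signs survive. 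Once the sign-compatible homotopy equivalence is in hand, the theorem follows at once, and I would close by noting that the construction is natural, which is what is needed for the subsequent cyclic Künneth statement of Theorem~\ref{Them:HC-Cyclic-shuffle}.
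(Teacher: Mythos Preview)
Your proposal is correct and is essentially the same approach as the paper's: the paper's proof consists only of the sentence ``the theorem can be proved by applying the same arguments as that of the Eilenberg--Zilber Theorem for Hochschild homology of algebras'' with a reference to Mac Lane, and what you have outlined is precisely that argument (shuffle/Alexander--Whitney homotopy equivalence plus K\"unneth over a field), spelled out with the appropriate attention to Koszul signs in the super setting.
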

\begin{proof}The theorem can be proved by applying the same arguments as that of the Eilenberg-Zilber Theorem for Hochschild homology of algebras (see \cite[Chapter 8, Theorem~8.1]{Maclane}. \end{proof}

\begin{point}{}* By definition a $(p,q)$-\textit{cyclic shuffle} is a permutation $\gs\in\fs_{p+q}$ defined as follows: perform a cyclic permutation of any order on the set of $\{1, \cdots, p\}$ and a cyclic permutation of any order on the set $\{p+1, \cdots, p+q\}$.  Then shuffle the two results to obtain the permutation. This is a cyclic shuffle if $1$ appears before $p+1$ in the sequence $(\sigma(1), \cdots, \sigma(p+q))$.

The cyclic shuffle is a map $\perp: \sn_{p}(\sa)\otimes\sn_q(\sb)\rr\sn_{p+q}(\sa\otimes\sb)$ given by\begin{align*}
  (a_0, a_1, \cdots, a_p)\perp(b_0, b_1, \cdots, b_q))=\sum_{\gs}\mathrm{sgn}(\gs)\gs(a_0\otimes b_0, a_1\otimes 1, \cdots, a_p\otimes1; 1\otimes b_1, \cdots, 1\otimes b_q),
\end{align*}
where the sum is extended over all $(p,q)$-cyclic shuffles. Note that there is a similar operation $\perp: \sn_{p}(\sa)\otimes\sn_q(\sb)\rr\sn_{n}(\sa)\otimes\sn_n(\sb)$ for $n=p+q$.

The \textit{cyclic shuffle map} is a map of degree 2 \begin{align*}\csh_{pq}: \sn_{p}(\sa)\!\otimes\!\sn_q(\sb)\rr\sn_{p+q+2}(\sa)\!\otimes\!\sn_{p+q+2}(\sb)\end{align*} defined by the following formula:
\begin{align*}
  \mathrm{csh}_{pq}(a,b):=\csh_{pq}(a\otimes b)=s(a)\perp s(b),
\end{align*}
where $s: \sn_n(\sa)\rr\sn_{n+1}(\sa)$ is the extra degeneracy of $\sn(\sa)$  (cf.~Section~\ref{Point:Cyclic-bicomplex}).  Note that $sB=0$ in the normalized setting, it follows that for any $a$ and $b$, \begin{equation}\label{Equation:B-s-csh}
  \csh(B(a),b)=sB(a)\perp b\qquad\text{and}\qquad \csh(a, B(b))=0.
\end{equation}
\end{point}
\begin{proposition}\label{Prop:HC-shuffle}
  For $a\!\in\!\sn_{p}(\!\sa\!)$ and $b\!\in\!\sn_{q}(\!\sb\!)$, the following equality holds in $\sn_{p\!+\!q\!+\!1}(\!\sa\!\otimes\!\sb\!)$: \begin{align*}
    B\sh_{pq}(a,b)\!-\!\left(\!\sh(B(a),b)\!+\!(\!-1\!)^{|a|}\sh(a,B(b))\!\right)\!
    +\!\partial\csh(a,b)\!-\!\csh(\partial(a),b)\!-\!(\!-\!1)^{|a|}\csh(a, \partial(b))\!=\!0.
  \end{align*}
\end{proposition}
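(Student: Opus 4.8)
The plan is to transcribe the computation that proves the corresponding identity for the cyclic homology of (super)algebras, as in Loday--Quillen \cite{Loday-Quillen}, while carrying along the Koszul signs imposed by the $\bz_2$-grading. The conceptual content of the displayed formula is that $(\sh,\csh)$ is a morphism of mixed complexes from $\sn(\sa)\otimes\sn(\sb)$, equipped with the Hochschild differential $\partial\otimes 1+1\otimes\partial$ and the Connes differential $B\otimes 1+1\otimes B$, to $\sn(\sa\otimes\sb)$ with its differentials $\partial$ and $B$. Indeed, writing $[\partial,\csh](a\otimes b)=\partial\csh(a,b)-\csh(\partial a,b)-(-1)^{|a|}\csh(a,\partial b)$ and $[B,\sh](a\otimes b)=B\sh(a,b)-\sh(B a,b)-(-1)^{|a|}\sh(a,B b)$, the assertion is exactly the single chain-level relation $[B,\sh]+[\partial,\csh]=0$.

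First I would assemble the facts already at hand. Corollary~\ref{Cor:partial-shuffle} gives $[\partial,\sh]=0$, so $\sh$ is a chain map for $\partial$; this is what allows the ``interior'' face maps to be treated uniformly with the algebra case. The relations (\ref{Equation:B-s-csh}), together with $sB=0$ in the normalized complex, fix the degenerate contributions: $\csh(a,B(b))=0$ and $\csh(B(a),b)=sB(a)\perp b$. The one genuinely new ingredient is the super-commutation rule inherited from the composition in $\sa\otimes\sb$, namely $(a_i\otimes 1)(1\otimes b_j)=(-1)^{|a_i||b_j|}(1\otimes b_j)(a_i\otimes 1)$ for homogeneous entries; this is the graded replacement for the identity $c_ic_{i+1}=c_{i+1}c_i$ used in the proof of Proposition~\ref{Prop:HH-shuffle}.

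The combinatorial core is the expansion of $\partial\csh(a,b)=\partial\bigl(s(a)\perp s(b)\bigr)$. Writing $\perp$ as a sum over $(p,q)$-cyclic shuffles and $\partial=\sum_i(-1)^id_n^i$, I would sort each face $d_n^i$ according to whether the two entries $c_i,c_{i+1}$ it composes lie in the same tensor factor or in different factors. The same-factor faces, after reindexing the cyclic shuffles, reassemble into $\csh(\partial a,b)+(-1)^{|a|}\csh(a,\partial b)$. The different-factor faces separate into two families: interior interchanges cancel in pairs, precisely as in Proposition~\ref{Prop:HH-shuffle}, because transposing an $\sa$-entry past a $\sb$-entry yields another cyclic shuffle whose sign is opposite once the Koszul factor $(-1)^{|a_i||b_j|}$ is included; the faces created by the extra degeneracy $s$ sitting inside $B=(1-t)s\sum_{i=0}^{n} t^i$ survive and collect into $B\sh(a,b)-\sh(B(a),b)-(-1)^{|a|}\sh(a,B(b))$. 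Matching the two assemblies yields the asserted relation.

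The main obstacle is the sign bookkeeping. Each interchange of an $\sa$-labelled and a $\sb$-labelled entry carries a Koszul factor $(-1)^{|a_i||b_j|}$, and these must be reconciled with the cyclic signs $(-1)^{n+|a_n|(|a_0|+\cdots+|a_{n-1}|)}$ built into $t_n$, hence into $B$, and with the shuffle signs $\mathrm{sgn}(\gs)$. I would control this by tagging each entry with its factor type and verifying that the total sign attached to a (cyclic) shuffle is multiplicative in the number of $\sa/\sb$ inversions; this makes the cancellation of the interior different-factor faces exact and forces the surviving boundary terms to acquire exactly the coefficients $+1$, $-1$ and $-(-1)^{|a|}$ displayed. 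Finally one checks the degree count: $\csh$ raises total degree by $2$ and $B$ by $1$, so every term lands in $\sn_{p+q+1}(\sa\otimes\sb)$, as required.
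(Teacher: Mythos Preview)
Your proposal is correct and follows essentially the same approach as the paper: both transcribe the Loday--Quillen combinatorial cancellation argument, verifying $[B,\sh]+[\partial,\csh]=0$ by sorting the terms and matching them in cancelling pairs. The paper organizes the computation by classifying the output elements into two types---pure permutations of $(a_i\otimes 1,\,1\otimes b_j)$ versus elements of the form $(1\otimes 1,\ldots)$ containing a mixed entry $a_i\otimes b_j$---and shows each type sums to zero, whereas you sort the faces of $\partial\csh$ by whether $c_i,c_{i+1}$ lie in the same tensor factor; these are two bookkeeping schemes for the same cancellations.
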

\begin{proof}Let $a=(a_0,a_1, \cdots, a_p)$ and $b=(b_0,b_1, \cdots, b_q)$. Note that the image  of $a\otimes b$ under any of these compositions of these maps is the sum of elements of two differential types: either it is a permutation of $(a_0\otimes 1, \cdots, a_{p}\otimes 1,1\otimes b_0,\cdots, 1\otimes b_q)$, or it is an element of the form $(1\otimes 1, \cdots)$ where one the of the other entries is of the form $a-i\otimes a_j$. We only need to show that the sum of the elements of the first type is 0 (similarly we can show that the sum of the elements of the second type is 0).

Elements of the first type arise only form $\partial\csh_{pq}$, $\sh_{p+1q}(B\otimes1)$ and $\sh_{pq+1}(1\otimes B)$. Let $a=(a_1, \cdots, a_n)$ and $\tau$ the cyclic permutation such that $\partial(1,a)=a+\mathrm{sgn}(\tau)\tau(a)$ modulo the elements of the form $(1, \cdots)$. So the permutations coming from $\partial\csh_{pq}$ are of the form (cyclic shuffle) or $\tau\circ$(cyclic shuffle). The cyclic shuffle which have 1 in the first position cancel with the permutations coming from $\sh_{p+1q}(B\otimes 1)$. For such a $\sigma$, consider $\tau\sigma$. If $p+1$ is in the first position, then $\tau\sigma$ cancels with a permutation coming from $\sh_{P+1q}(B\otimes 1)$, if not then it is a cyclic shuffle $\sigma'$ (and it cancels with it). So we are led to examine $\tau\sigma'$ for which we play the same game. By the end all the elements have disappeared.\end{proof}
\begin{point}{}*The \textit{cyclic shuffle product}
\begin{align*}
  \csh: (\sn(\sa)\otimes\sn(\sb))_n=\bigoplus_{p+q=n}\sn_p(\sa)\otimes\sn_q(\sb)\lrr\csh(\sn(\sa),\sn(\sb))_{n+2}
\end{align*}
is the sum of all the $(p,q)$-shuffle maps $\csh_{pq}$ for $p+q=n$. \end{point}

\begin{lemma}\label{Lem:partial-B-sh-csh}
  The maps $\partial$, $B$, $\sh$, and $\csh$ satisfy the following formulas in the normalized setting \begin{align*}
    \mathsf{(i)}\quad [\partial,\sh]=0,\qquad\mathsf{(ii)}\quad [B,\sh]+[\partial, \csh]=0,\qquad\mathsf{(iii)}\quad [B,\csh]=0.
  \end{align*}
\end{lemma}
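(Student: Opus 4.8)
The plan is to establish the three bracket identities by translating each into a statement about signed sums of shuffle-type permutations, exactly as in the algebra case treated by Loday, with the extra bookkeeping of Koszul signs coming from the $\bz_2$-grading. Identity $\mathsf{(i)}$ is already available: it is precisely Corollary~\ref{Cor:partial-shuffle}, so nothing new is needed there. The substance of the lemma lies in $\mathsf{(ii)}$ and $\mathsf{(iii)}$, and the key observation is that Proposition~\ref{Prop:HC-shuffle} is the componentwise version of $\mathsf{(ii)}$.

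For $\mathsf{(ii)}$, I would unwind the definitions of the graded brackets. By the tensor-product differential convention recalled before Corollary~\ref{Cor:partial-shuffle}, one has
\begin{align*}
  [B,\sh](a\otimes b)&=B\,\sh(a\otimes b)-\sh\bigl(B(a)\otimes b+(-1)^{|a|}a\otimes B(b)\bigr),\\
  [\partial,\csh](a\otimes b)&=\partial\,\csh(a\otimes b)-\csh\bigl(\partial(a)\otimes b+(-1)^{|a|}a\otimes\partial(b)\bigr),
\end{align*}
and adding these and restricting to the $(p,q)$-component gives exactly the left-hand side of the displayed equation in Proposition~\ref{Prop:HC-shuffle}, which vanishes. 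Summing over all $p+q=n$ then yields $[B,\sh]+[\partial,\csh]=0$ on all of $\sn(\sa)\otimes\sn(\sb)$. So $\mathsf{(ii)}$ follows immediately from Proposition~\ref{Prop:HC-shuffle} together with the sign conventions already fixed, provided one is careful that the $(-1)^{|a|}$ factors in the bracket definitions match those in the proposition.

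For $\mathsf{(iii)}$, the strategy is the same degeneracy-cancellation argument used to prove Proposition~\ref{Prop:HC-shuffle}, now applied to $B\csh$ versus $\csh B$. Here I would exploit the relations in $(\ref{Equation:B-s-csh})$: since $\csh(a,B(b))=0$ and $\csh(B(a),b)=sB(a)\perp b$ in the normalized setting, the bracket $[B,\csh]$ reduces to comparing $B\csh(a\otimes b)=B\bigl(s(a)\perp s(b)\bigr)$ with the single surviving term $\csh(B(a)\otimes b)$. Both produce sums of cyclic-shuffle permutations of $(1\otimes 1,\dots)$-type elements, and the normalization $sB=0$ together with $s^2=0$ forces the repeated extra degeneracies to collapse; the remaining permutations pair off and cancel by the same $c_ic_{i+1}=c_{i+1}c_i$ commutation used at the end of Proposition~\ref{Prop:HC-shuffle}'s proof.

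The main obstacle I anticipate is the sign bookkeeping in $\mathsf{(iii)}$: the graded bracket $[B,\csh]$ carries a factor depending on the degree of $B$ as an operator (degree $+1$) composed with $\csh$ (degree $+2$), so one must track both the homological signs and the Koszul signs from reordering homogeneous morphisms $a_i\otimes 1$ past $1\otimes b_j$ simultaneously. I would isolate this by first verifying the identity on homogeneous generators where all $|a_i|,|b_j|$ are fixed, reducing the claim to a purely combinatorial cancellation of signed cyclic shuffles, and only then invoke bilinearity. The cleanest route is to note that each of $\mathsf{(i)}$--$\mathsf{(iii)}$ is the superadditive-category transcription of Loday's lemma for cyclic homology of algebras, so the cancellations are structurally identical and only the sign rules require re-derivation.
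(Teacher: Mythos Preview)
Your treatment of $\mathsf{(i)}$ and $\mathsf{(ii)}$ is correct and is exactly the paper's argument: $\mathsf{(i)}$ is Corollary~\ref{Cor:partial-shuffle}, and $\mathsf{(ii)}$ is the component-free restatement of Proposition~\ref{Prop:HC-shuffle}, once the bracket conventions are unpacked as you did.

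For $\mathsf{(iii)}$ you overcomplicate, and there is a small slip. You write that $\csh(B(a),b)=sB(a)\perp b$ is ``the single surviving term'' to be matched against $B\csh(a,b)$ by a permutation-cancellation argument. But recall the sentence immediately preceding (\ref{Equation:B-s-csh}): in the normalized setting $sB=0$, so $\csh(B(a),b)=sB(a)\perp s(b)=0$ as well. Thus $\csh\circ(B\otimes 1+(-1)^{|a|}1\otimes B)$ vanishes outright, and there is nothing on that side to cancel against. The paper's proof of $\mathsf{(iii)}$ simply invokes (\ref{Equation:B-s-csh}) for exactly this reason.

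The remaining half, $B\circ\csh=0$, is equally direct and does not require the shuffle-pairing mechanism from Proposition~\ref{Prop:HC-shuffle}. By definition $\csh(a,b)=s(a)\perp s(b)$ is a sum of terms each of the form $(1\otimes 1,\,c_1,\dots,c_{p+q+2})$, i.e.\ each term lies in the image of the extra degeneracy $s$ for $\sa\otimes\sb$. Since $Bs=0$ in the normalized complex (every summand of $B(1,c_1,\dots,c_m)$ has an identity in a positive slot), it follows that $B\csh(a,b)=0$. Both halves of $[B,\csh]$ are therefore zero separately; no cross-cancellation of cyclic-shuffle permutations between $B\csh$ and $\csh B$ is needed, and the $c_ic_{i+1}=c_{i+1}c_i$ device plays no role here. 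Your proposed route is not wrong in spirit, but it misidentifies which terms survive and sets up a cancellation that never has to happen.
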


\begin{proof}$\mathsf{(i)}$ is exactly Corollary~\ref{Cor:partial-shuffle}. $\mathsf{(ii)}$ is an immediate consequence of Proposition~\ref{Prop:HC-shuffle}. $\mathsf{(iii)}$ follows directly by applying Equation~\ref{Equation:B-s-csh}.
\end{proof}

\begin{theorem}\label{Them:Shuffle-cyclic-shuffle}
  Let $\sa$ and $\sb$ be superadditive categories. Then the shuffle product and the cyclic shuffle product induce a canonical isomorphism \begin{align*}
    \mathrm{Sh}: \HC_*(\sn_*(\sa)\otimes\sn_*(\sb))\stackrel{\sim}\lrr\HC(\sh(\sn_*(\sa\otimes\sb))).
  \end{align*}
  Furthermore $\mathrm{Sh}$ commutes with the morphisms $B$, $I$ and $S$ of Connes's exact sequence.
\end{theorem}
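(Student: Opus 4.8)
The plan is to recognize that the three identities collected in Lemma~\ref{Lem:partial-B-sh-csh} say exactly that $\mathrm{Sh}=\sh+\csh$ is a morphism of mixed complexes, and then to upgrade this chain map to an isomorphism on cyclic homology by comparing Gysin--Connes sequences. First I would equip $\sn_*(\sa)\otimes\sn_*(\sb)$ with its tensor-product mixed-complex structure, with Hochschild differential $\partial_{\otimes}=\partial\otimes 1+1\otimes\partial$ and Connes operator $B_{\otimes}=B\otimes 1+1\otimes B$; the Koszul signs force the cross terms to cancel, so $B_{\otimes}^2=0$ and $\partial_{\otimes}B_{\otimes}+B_{\otimes}\partial_{\otimes}=0$, and the associated total complex computes $\HC_*(\sn_*(\sa)\otimes\sn_*(\sb))$. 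Writing $D=\partial+B$ and $D_{\otimes}=\partial_{\otimes}+B_{\otimes}$ for the two cyclic differentials, I would verify that $\mathrm{Sh}$ is a chain map by expanding
\begin{align*}
D\,\mathrm{Sh}-\mathrm{Sh}\,D_{\otimes}
&=(\partial\sh-\sh\,\partial_{\otimes})+(\partial\csh-\csh\,\partial_{\otimes})+(B\sh-\sh\,B_{\otimes})+(B\csh-\csh\,B_{\otimes})\\
&=[\partial,\sh]+\bigl([B,\sh]+[\partial,\csh]\bigr)+[B,\csh],
\end{align*}
whose three grouped summands vanish by parts $\mathsf{(i)}$, $\mathsf{(ii)}$ and $\mathsf{(iii)}$ of Lemma~\ref{Lem:partial-B-sh-csh}. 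Here one checks that both $\sh$ and $\csh$ preserve the total degree, $\csh$ raising the internal degree by $2$ while lowering the column index by one, so $\mathrm{Sh}$ is a genuine degree-$0$ chain map and descends to $\mathrm{Sh}\colon\HC_*(\sn_*(\sa)\otimes\sn_*(\sb))\lrr\HC_*(\sn_*(\sa\otimes\sb))$.

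Next I would record how $\mathrm{Sh}$ interacts with the column filtration, which yields the ``furthermore'' clause. Since $\sh_{pq}$ lands in $\sn_{p+q}(\sa\otimes\sb)$ it preserves the column index $k$ (the copy of $\sn_{n-2k}$ sitting in column $k$), whereas $\csh_{pq}$ lands in $\sn_{p+q+2}(\sa\otimes\sb)$ and so lowers the column index by one; hence $\mathrm{Sh}$ is compatible with the filtration by columns. In particular its restriction to the zeroth column is just $\sh$, because the $\csh$-component would land in the (nonexistent) column $-1$; thus $\mathrm{Sh}$ intertwines the two inclusions $I$ of the Hochschild--Mitchell complex as the first column and induces $\sh_*$ on Hochschild homology. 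As $\mathrm{Sh}$ respects the defining short exact sequence $0\to\sn(\sa\otimes\sb)\xrightarrow{I}C(\sa\otimes\sb)\xrightarrow{S}C(\sa\otimes\sb)[2]\to 0$ (and its tensor-side analogue), it commutes with the periodicity operator $S$, and naturality of the connecting homomorphism then shows it also commutes with $B$.

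Finally I would prove that $\mathrm{Sh}$ is an isomorphism. Over the field $K$ the algebraic K\"unneth theorem identifies the homology of $(\sn_*(\sa)\otimes\sn_*(\sb),\partial_{\otimes})$ with $\HH_*(\sa)\otimes\HH_*(\sb)$, and Theorem~\ref{Them:HH-shuffle} then gives that the map induced by $\sh$ on this Hochschild homology is an isomorphism. By the previous paragraph $\mathrm{Sh}$ determines a morphism between the two Gysin--Connes exact sequences (\ref{Equ:Gysin-Connes-Exact-Sequ}) in which all the Hochschild terms are isomorphisms. I would then induct on $n$: all groups vanish for $n<0$, and applying the five lemma to the segment $\HC_{n-1}\xrightarrow{B}\HH_n\xrightarrow{I}\HC_n\xrightarrow{S}\HC_{n-2}\xrightarrow{B}\HH_{n-1}$, whose four outer vertical maps are isomorphisms by the inductive hypothesis on $\HC_{n-1}$, $\HC_{n-2}$ and by the Hochschild identifications, forces $\mathrm{Sh}\colon\HC_n\to\HC_n$ to be an isomorphism for every $n$.

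The homological skeleton here is the classical one of Loday for algebras, so the real obstacle is not the formal argument but the super sign bookkeeping: one must ensure that the Koszul signs entering $\partial_{\otimes}$, $B_{\otimes}$ and the (cyclic) shuffle formulas are mutually consistent, so that the three bracket identities hold on the nose. That sign-sensitive computation, however, is precisely what has already been absorbed into Proposition~\ref{Prop:HC-shuffle} and Lemma~\ref{Lem:partial-B-sh-csh}; granting them, the proof is the formal one sketched above.
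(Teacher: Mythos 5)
Your proposal is correct and follows essentially the same route as the paper: both define $\mathrm{Sh}$ on the total complexes via the upper-triangular combination of $\sh$ and $\csh$, use the three identities of Lemma~\ref{Lem:partial-B-sh-csh} to make it a chain map compatible with the short exact sequences defining $S$ and $I$, and then deduce the isomorphism from Theorem~\ref{Them:HH-shuffle} via the comparison of Connes exact sequences. Your write-up merely makes explicit the details the paper leaves implicit (the K\"unneth identification of the Hochschild homology of the tensor complex and the five-lemma induction).
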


\begin{proof} Lemma~\ref{Lem:partial-B-sh-csh}(i) implies that the shuffle map $\sh: \sn_*(\sa)\otimes \sn_*(\sb)\rr\sn_*(\sa\otimes\sb)$ is a map of complexes. However it is not a map of mixed complexes since $B$ does not commute with $\sh$ (see Lemma~\ref{Lem:partial-B-sh-csh}(ii)). Note that there is following commutative diagram
$$\xymatrix{
   0 \ar[r]^{} & \sn_*(\sa)\otimes
   \sn_*(\sb) \ar[d]_{\sh} \ar[r]^{} & \mathrm{Tot}\sn_*(\sa)\otimes
   \sn_*(\sb) \ar@{.>}[d]_{\mathrm{Sh}} \ar[r]^{} & \mathrm{Tot}\sn_*(\sa)\otimes
   \sn_*(\sb)[2] \ar@{.>}[d]^{\mathrm{Sh}[2]} \\
    0\ar[r]^{} & \sn_*(\sa\otimes\sb)\ar[r]^{} & \mathrm{Tot}\sn_*(\sa\otimes\sb)\ar[r]^{} & \mathrm{Tot}\sn_*(\sa\otimes\sb)[2].}$$
Recall that $$\aligned
  & \mathrm{Tot}(\sn_*(\sa)\otimes
   \sn_*(\sb))_n=\bigoplus_{k=0}^{\infty}(\sn_*(\sa)\otimes
   \sn_*(\sb))_{n-2k}\\& \mathrm{Tot}(\sn_*(\sa\otimes
  \sb))_n=\bigoplus_{k=0}^{\infty}(\sn_*(\sa)\otimes
   \sn_*(\sb))_{n-2k}
\endaligned$$
Now let $$\mathrm{Sh}=\left(\begin{array}
  {ccccc}\sh&\csh&&&\\&\sh&\csh&&\\&&\sh&\csh&\\&&&\ddots&\ddots
\end{array}\right).$$
By Lemma~\ref{Lem:partial-B-sh-csh}, $\mathrm{Sh}$ is a morphism of complexes. The commutativity of the right-hand square comes form the form of $\mathrm{Sh}$ and the commutativity of the left-hand square is immediate. Applying Theorem~\ref{Them:HH-shuffle}, we complete the proof of the theorem.
\end{proof}

We now state our main theorem comparing $\HC_n(\sa\otimes\sb)$ with $\HC_n(\sa)$ and $\HC_n(\sb)$.

\begin{theorem}\label{Them:HC-Cyclic-shuffle}
  Let $\sa$ and $\sb$ be superadditive categories over a filed $K$. Then there exists a natural long exact sequence
  \begin{align*}
    \xymatrix@C=0.5cm{ \cdots \ar[r]&
\HC_n(\sa\otimes\sb)\ar[d]^{}\\  &\displaystyle\bigoplus_{p+q=n}\HC_p(\sa)\otimes\HC_q(\sb)\quad\ar[rr]^{S\otimes \mathrm{id}-\mathrm{id}\otimes S}&&\quad\displaystyle\bigoplus_{p+q=n-2} \HC_p(\sa)\otimes\HC_q(\sb) \ar[d]^{}\\ &&&\HC_{n-1}(\sa\otimes\sb) \ar[r]^{}&\cdots}
  \end{align*}
\end{theorem}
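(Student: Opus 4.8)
The plan is to derive the long exact sequence by feeding the isomorphism $\mathrm{Sh}$ of Theorem~\ref{Them:Shuffle-cyclic-shuffle} into the standard K\"{u}nneth-type machinery for cyclic homology, exactly as in the algebra case. The starting point is that $\mathrm{Sh}$ is an isomorphism of the total (cyclic) complexes that commutes with Connes's $S$, $I$, and $B$. Hence on the level of cyclic homology we have a canonical identification
\begin{equation*}
\HC_*(\sa\otimes\sb)\cong \HC_*\bigl(\sn_*(\sa)\otimes\sn_*(\sb)\bigr),
\end{equation*}
where the right-hand side is the cyclic homology of the tensor product of the two \emph{mixed} complexes $(\sn_*(\sa),\partial,B)$ and $(\sn_*(\sb),\partial,B)$. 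So the problem reduces to computing the cyclic homology of a tensor product of mixed complexes purely in terms of the cyclic homologies of the factors.

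First I would recall the algebraic input: for two mixed complexes $M$ and $N$ over a field $K$, there is a canonical long exact K\"{u}nneth sequence
\begin{equation*}
\cdots\rr \HC_n(M\otimes N)\rr \bigoplus_{p+q=n}\HC_p(M)\otimes\HC_q(N)\stackrel{S\otimes\mathrm{id}-\mathrm{id}\otimes S}\lrr \bigoplus_{p+q=n-2}\HC_p(M)\otimes\HC_q(N)\rr \HC_{n-1}(M\otimes N)\rr\cdots
\end{equation*}
This is precisely the content of the K\"{u}nneth formula for cyclic homology of mixed complexes (Loday, \emph{Cyclic Homology}, \S4.3), and its proof rests on the fact that over a field the Hochschild-level shuffle map is a quasi-isomorphism, which in our setting is supplied by Theorem~\ref{Them:HH-shuffle}. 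The key structural fact making the connecting maps take the displayed form is that, after the $\mathrm{Sh}$-identification, the periodicity operator $S$ on the tensor product corresponds to the two operators $S\otimes\mathrm{id}$ and $\mathrm{id}\otimes S$ on the factors, whose difference governs the obstruction to the cyclic homology of a tensor product splitting as a tensor product.

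The second step is then to combine the two ingredients. I would apply the Gysin-Connes exact sequence (\ref{Equ:Gysin-Connes-Exact-Sequ}) for the superadditive categories $\sa$, $\sb$, and $\sa\otimes\sb$, and use that $\mathrm{Sh}$ intertwines the three sequences (the ``furthermore'' clause of Theorem~\ref{Them:Shuffle-cyclic-shuffle}). Concretely, one sets up the mixed-complex K\"{u}nneth argument: resolve each cyclic complex by its filtration via $S$, take the associated spectral sequences, and use Theorem~\ref{Them:HH-shuffle} to identify the $E_1$-pages (the Hochschild levels) so that the spectral sequences agree; the long exact sequence then emerges as the low-degree exact sequence comparing the two filtered objects. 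Naturality in $\sa$ and $\sb$ follows because every map in sight---the shuffle and cyclic shuffle products, the Connes operators, and the $\mathrm{Sh}$ isomorphism---is natural.

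The main obstacle I expect is verifying that the connecting map is \emph{exactly} $S\otimes\mathrm{id}-\mathrm{id}\otimes S$ with the correct signs, rather than merely proving existence of \emph{some} long exact sequence. This requires tracking the sign conventions coming from the Koszul rule in the graded tensor product $\sa\otimes\sb$ through the explicit matrix form of $\mathrm{Sh}$ recorded in the proof of Theorem~\ref{Them:Shuffle-cyclic-shuffle}, and confirming that the component $\csh$ of $\mathrm{Sh}$ is what converts the individual periodicity operators into the displayed difference. In the classical (ungraded) case this is Loday's Theorem~4.3.12; the super-version works verbatim provided Lemma~\ref{Lem:partial-B-sh-csh} holds in the normalized setting, which it does, so I would emphasize that the only genuinely new verification is that all the super-signs are absorbed consistently. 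Thus the proof is completed by invoking the mixed-complex K\"{u}nneth theorem applied to the pair of mixed complexes $\sn_*(\sa)$ and $\sn_*(\sb)$, transported along the isomorphism $\mathrm{Sh}$ of Theorem~\ref{Them:Shuffle-cyclic-shuffle}.
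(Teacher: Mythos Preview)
Your proposal is correct and follows essentially the same route as the paper: the paper's proof is a one-line invocation of Theorem~\ref{Them:Shuffle-cyclic-shuffle}, Lemma~\ref{Lem:partial-B-sh-csh}, and Loday's mixed-complex K\"{u}nneth lemma (\cite[Lemma~4.39]{Loday}), which is exactly the machinery you describe in expanded form. Your additional discussion of spectral sequences and sign-tracking is more detail than the paper supplies, but it is the content that Loday's lemma packages, so the two arguments coincide.
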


\begin{proof}The proof follows direct by applying Theorem~\ref{Them:Shuffle-cyclic-shuffle}, Lemma~\ref{Lem:partial-B-sh-csh}, and \cite[Lemma~4.39]{Loday}.
\end{proof}
\bibliographystyle{amsplain}

\end{document}